\newif{\ifarxiv}
\newcommand\eqdef{\mathrel{\buildrel \text{def}\over=}}
\newcommand\pow{\mathbb{P}}
\newcommand\Pfin{\pow_{\mathrm{fin}}}
\newcommand{\real}{\mathbb{R}}
\newcommand\diff{\smallsetminus}
\DeclareMathOperator{\upc}{\uparrow\!}
\DeclareMathOperator{\dc}{\downarrow\!}
\newcommand\nat{\mathbb{N}}
\newcommand\uuarrow{\rlap{$\uparrow$}\raise.5ex\hbox{$\uparrow$}}
\newcommand\ddarrow{\rlap{$\downarrow$}\raise.5ex\hbox{$\downarrow$}}
\newcommand\identity[1]{\mathrm{id}_{#1}}
\newcommand\dcup{\bigcup\nolimits^{\scriptstyle\uparrow}}
\newcommand\Plotkin{\mathop{\mathcal P\ell}}
\newcommand\Plotkinn{\Plotkin_{\mathcal V}}
\newcommand\PV\Plotkinn 
\newcommand\Topcat{\mathbf{Top}}
\newcommand{\interior}[1]{\text{int} ({#1})} 
\newcommand\cb[1]{\mathbf{#1}} 
\newcommand{\catc}{{\cb{C}}}
\newcommand{\cati}{{\cb{I}}}
\newtheorem{theorem}{Theorem}[section]
\newtheorem{proposition}[theorem]{Proposition}
\newtheorem{corollary}[theorem]{Corollary}
\newtheorem{lemma}[theorem]{Lemma}
\newproof{proof}{Proof}
\newtheorem{fact}[theorem]{Fact}
\newtheorem{example}{Example}[section]
\newtheorem{remark}[theorem]{Remark}
\newcommand\ForAuthors[1]
\journal{Topology and its Applications}
\begin{document}

\begin{frontmatter}



\title{A Few Projective Classes of (Non-Hausdorff) Topological Spaces}


\author{Jean Goubault-Larrecq}

\address{Universit\'e Paris-Saclay, CNRS, ENS Paris-Saclay,
  Laboratoire M\'ethodes
  Formelles, 91190, Gif-sur-Yvette, France.\\
  \texttt{jgl@lml.cnrs.fr}
}

\begin{abstract}
  A class of topological spaces is projective (resp.,
  $\omega$-projective) if and only if projective systems of spaces
  (resp., with a countable cofinal subset of indices) in the class are
  still in the class.  A certain number of classes of Hausdorff spaces
  are known to be, or not to be, ($\omega$-) projective.  We examine
  classes of spaces that are not necessarily Hausdorff.  Sober and
  compact sober spaces form projective classes, but most classes of
  locally compact spaces are not even $\omega$-projective.  Guided by
  the fact that the stably compact spaces are exactly the locally
  compact, strongly sober spaces, and that the strongly sober spaces
  are exactly the sober, coherent, compact, weakly Hausdorff (in the
  sense of Keimel and Lawson) spaces, we examine which classes defined
  by combinations of those properties are projective.  Notably, we
  find that coherent sober spaces, compact coherent sober spaces, as
  well as (locally) strongly sober spaces, form projective classes.
\end{abstract}

\begin{keyword}
  projective limit \sep stably compact space \sep strongly sober space
  \sep coherent space \sep weakly Hausdorff space
  \MSC[2020] 54F17 \sep 54D70 \sep 54D30 \sep 54D45
  
\end{keyword}

\end{frontmatter}


\noindent
\begin{minipage}{0.25\linewidth}
  \includegraphics[scale=0.2]{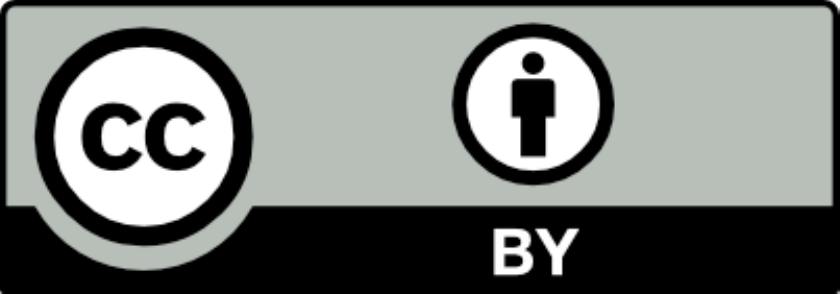}
\end{minipage}
\begin{minipage}{0.74\linewidth}
  \scriptsize
  For the purpose of Open Access, a CC-BY public copyright licence has
  been applied by the authors to the present document and will be
  applied to all subsequent versions up to the Author Accepted
  Manuscript arising from this submission.
\end{minipage}

\section{Introduction}
\label{sec:intro}

Projective limits, also called inverse limits, are categorical limits
of diagrams indexed by $(I, \sqsupseteq)$, where $(I, \sqsubseteq)$ is
a directed poset.  When $I$ has a countable cofinal subset, we will
call them $\omega$-projective limits.  Projective limits and
$\omega$-projective limits are an important concept in mathematics,
and this paper is concerned with exploring ($\omega$-)projective
limits in the category $\Topcat$ of topological spaces.  We call a
class of topological spaces ($\omega$-)\emph{projective} if and only
if it closed under ($\omega$-)projective limits.

Various classes of spaces are known to be projective or
$\omega$-projective.  The question is pretty simple for classes of
Hausdorff spaces, since in that case limits are closed subspaces of
products.  Hence every class of Hausdorff spaces that is closed under
homeomorphisms, closed subspaces and products is closed under
arbitrary limits and in particular forms a projective class; this
includes Hausdorff spaces, compact Hausdorff spaces, or regular
Hausdorff spaces for example.  Classes of spaces that are closed under
homeomorphisms, arbitrary subspaces and products are projective, too.
Classes of spaces that are closed under homeomorphisms, arbitrary
subspaces and countable products are $\omega$-projective, and this
includes first-countable spaces, second-countable spaces, and
metrizable spaces.  Finally, classes of Hausdorff spaces that are
closed under homeomorphisms, closed subspaces and countable products
are $\omega$-projective, for example completely metrizable spaces, and
Polish spaces.

The impetus of the present paper comes from \cite{JGL:kolmogorov},
where a few theorems in the style of Prokhorov's famous theorem on the
existence of projective limits of probability measures are shown for
continuous valuations (instead of measures), and for projective limits
of locally compact sober spaces, or for projective limits of
LCS-complete spaces (namely, $G_\delta$-subspaces of locally compact
sober spaces), notably.  Those are classes of spaces that are not
necessarily Hausdorff, and therefore we will focus on such classes of
non-Hausdorff spaces.  The importance of these classes stems from
domain theory \cite{AJ:domains}, where domains, or continuous dcpos,
are certain locally compact sober spaces.  Classical properties of
interest include sobriety, coherence, stable (local) compactness,
among others \cite{JGL-topology,GHKLMS:contlatt}; see
Section~\ref{sec:preliminaries}.  One of our guiding principles in
choosing which properties to examine is by picking one of the most
useful ones, stable compactness, and to examine how much of it is
preserved through projective limits.  As we will recall, the locally
compact, strongly sober spaces, and the strongly sober spaces are
exactly the sober, coherent, compact, weakly Hausdorff (in the sense
of Keimel and Lawson) spaces, and we will systematically inquire which
of those properties are preserved by ($\omega$-)projective limits.

\paragraph{Outline}
We recall a few preliminaries in Section~\ref{sec:preliminaries},
which we conclude by making a short tour of what is already known on
the topic, and in particular stating a fundamental theorem which we
call Steenrod's theorem, after Fujiwara and Kato
\cite{FK:rigid:geometry}.  Section~\ref{sec:sobr-comp-local} studies
the projectivity of classes defined by properties of sobriety,
compactness, and local compactness: sober and compact sober spaces
will form projective classes, but local compactness is usually
destroyed by taking projective
limits.  
In Section~\ref{sec:coherence}, we show that coherent sober
spaces form a projective class, and in
Section~\ref{sec:local-strong-sobr}, that (locally) strongly sober
spaces also form a projective class.  We conclude in
Section~\ref{sec:conclusion}.

\section{Preliminaries}
\label{sec:preliminaries}

A \emph{diagram} in a category $\catc$ is a functor
$F \colon \cati \to \catc$ from a small category $\cati$ to $\catc$.
We let $|\cati|$ denote the set of objects of $\cati$.  A \emph{cone}
of $F$ is a pair $X, {(p_i)}_{i \in |\cati|}$, where $X$ is an object
of $\catc$ and the morphisms $p_i \colon X \to F (i)$, for each
$i \in |\cati|$ are such that for every morphism
$\varphi \colon j \to i$ in $\cati$, $F (\varphi) \circ p_j = p_i$.  A
\emph{limit} of $F$ is a \emph{universal cone} of $F$, namely a cone
such that for every cone $Y, {(q_i)}_{i \in |\cati|}$ of $F$, there is
a unique morphism $f \colon Y \to X$ such that $p_i \circ f = q_i$ for
every object $i$ of $\cati$.  Limits are unique up to isomorphism when
they exist.  All limits exist in $\Topcat$, and the following is the
\emph{canonical limit} of $F$: $X$ is the subspace of
$\prod_{i \in |\cati|} F (i)$ consisting of those tuples $\vec x$ such
that $F (\varphi) (x_j) = x_i$ for every morphism
$\varphi \colon j \to i$ in $\cati$, with $p_i$ mapping $\vec x$ to
$x_i$.  We routinely write $\vec x$ for tuples
${(x_i)}_{i \in |\cati|}$, and $x_i$ for their $i$th components.

The special case of a diagram over the opposite $(I, \sqsupseteq)$ of
a directed preordered set $(I, \sqsubseteq)$ is called a
\emph{projective system}.  A \emph{directed} preordered set, or in
general a directed subset of a preordered set, is a non-empty subset
in which any two elements have an upper bound.  We call (canonical)
\emph{projective limit} any limit (the canonical limit) of a
projective system.  Explicitly, a projective system of topological
spaces, which we will write as
${(p_{ij} \colon X_j \to X_i)}_{i \sqsubseteq j \in I}$, is a
collection of spaces $X_i$ indexed by a directed preordered set
$(I, \sqsubseteq)$, with morphisms $p_{ij} \colon X_j \to X_i$ for all
indices $i \sqsubseteq j$ such that $p_{ii} = \identity {X_i}$ and
$p_{ij} \circ p_{jk} = p_{ik}$ for all $i \sqsubseteq j \sqsubseteq k$
in $I$.  Its canonical projective limit $X, {(p_i)}_{i \in I}$ is
given by
$\{\vec x \in \prod_{i \in I} X_i \mid \forall i \sqsubseteq j \in I,
p_{ij} (x_j) = x_i\}$, with the subspace topology from the product,
and where $p_i$ is projection onto coordinate $i$.  We will familiarly
call the maps $p_{ij}$ the \emph{bonding maps}.

When $I$ has a countable cofinal subset, we talk about
$\omega$-projective systems and $\omega$-projective limits.  The
latter are free from certain apparent pathologies: for example,
projective limits may be empty, even when every space $X_i$ is
non-empty and the maps $p_{ij}$ are surjective
\cite{Henkin:invlimit,Waterhouse:projlim:empty}, but this is not the
case with $\omega$-projective limits.


For background on topology, we refer the reader to
\cite{JGL-topology}, for the most part.  We write $\interior A$ for
the interior of $A$, $cl (A)$ for the closure of $A$.

Every topological space $X$ has a specialization preordering, which we
will ordinarily write as $\leq$, and defined by: $x \leq y$ if and
only if every open neighborhood of $x$ contains $y$.  We will also say
that $x$ is \emph{below} $y$ and that $y$ is \emph{above} $x$.  A
space is $T_0$ if and only if $\leq$ is antisymmetric, $T_1$ if and
only if $\leq$ is the equality relation.

An \emph{irreducible} closed subset $C$ of $X$ is a non-empty closed
subset such that, for any two closed subsets $C_1$ and $C_2$ of $X$
such that $C \subseteq C_1 \cup C_2$, $C$ is included in $C_1$ or in
$C_2$ already; equivalently, if $C$ intersects two open sets, it must
intersect their intersection.  A space $X$ is \emph{sober} if and only
if it is $T_0$ and every irreducible closed subset is of the form
$\dc x$ for some point $x \in X$.  The notation $\dc x$ stands for the
\emph{downward closure} of $x$ in $X$, namely the set of points $y$
below $x$.  Symmetrically, $\upc x$ stands for the \emph{upward
  closure} of $x$, namely the set of points $y$ above $x$.  This
notation extends to $\upc A$, for any subset $A$, denoting $\bigcup_{x
  \in A} \upc x$.

A subset of $X$ is \emph{saturated} if and only if it is equal to the
intersection of its open neighborhoods.  The saturated subsets are
exactly the upwards-closed subsets of $X$, namely the subsets that
contain any point above any of its elements.

A \emph{compact} subset is one such that one can extract a finite
subcover from any of its open covers.  No separation property is
assumed.  Without Hausdorffness, compactness is a pretty weak
property: for example, any space with a least element is compact.  For
every subset $A$ of a space $X$, $A$ is compact if and only if
$\upc A$ is, and then the latter is compact saturated.

A space $X$ is \emph{locally compact} if and only if every point has a
basis of compact saturated neighborhoods; i.e., for every $x \in X$,
for every open neighborhood $U$ of $x$ in $X$, there is a compact
saturated subset $Q$ of $X$ such that
$x \in \interior Q \subseteq Q \subseteq U$.  This coincides with the
usual definition in Hausdorff spaces (where we only require that each
point have a compact neighborhood), but the latter definition is
needed in non-Hausdorff spaces.  One should also note that a compact
space need not be locally compact, unlike what happens with Hausdorff
spaces.

It is traditional in domain theory to call \emph{coherent} any space
in which the intersection of any two compact saturated subsets is
compact (and necessarily saturated), and this is the convention we
shall use.  We should warn the reader that, in algebraic geometry,
coherent has another meaning.  For example, Fujiwara and Kato
\cite[Definition~2.2.1]{FK:rigid:geometry} define coherent as a
compact spaces with a base of compact-open subsets that is closed
under binary intersections; the coherent sober spaces in their sense
are also called \emph{spectral}.  All of them are coherent in our
sense, but the converse fails.  For example, any Hausdorff space is
coherent in our sense, say $\real$, but $\real$ is not spectral.

A \emph{stably locally compact space} is a sober, locally compact and
coherent space.  It is \emph{stably compact} if compact, too.  (Every
spectral space is stably compact.)

It turns out the the stably (locally) compact spaces are exactly the
locally compact, (locally) strongly sober spaces
\cite[Proposition~VI-6.15, Corollary~VI-6.16]{GHKLMS:contlatt}.  A
space $X$ is \emph{locally strongly sober} (resp., \emph{strongly
  sober}) if and only if the set $\lim \mathcal U$ of limits of every
convergent (resp., arbitrary) ultrafilter $\mathcal U$ on $X$ is the
closure $\dc x$ of a unique point $x$
\cite[Definition~VI-6.12]{GHKLMS:contlatt}.  A space is strongly sober
if and only if it is locally strongly sober and compact.

As we will see, projective limits of stably (locally) compact spaces
fail to be stably (locally) compact, but the only property that is
lost in the process is local compactness: (local) strong sobriety is
preserved.  We will use the following notions from \cite{JGL:wHaus} in
order to see this.

A \emph{weakly Hausdorff space}, in the sense of Keimel and Lawson
\cite[Lemma~6.6]{KL:measureext}, is a space in which any of the
following equivalent conditions is met:
\begin{itemize}
\item for any two compact saturated subsets $Q$ and $Q'$, for every
  open neighborhood $W$ of $Q \cap Q'$, there is an open neighborhood
  $U$ of $Q$ and an open neighborhood $V$ of $Q'$ such that $U \cap V
  \subseteq W$ (resp., $U \cap V = W$);
\item for any two points $x$ and $y$, for every open neighborhood $W$
  of $\upc x \cap \upc y$, there is an open neighborhood $U$ of $x$
  and an open neighborhood $V$ of $y$ such that $U \cap V \subseteq W$
  (resp., $U \cap V = W$).
\end{itemize}
As is mentioned in \cite[Proposition~2.2]{JGL:wHaus}, all Hausdorff
spaces are weakly Hausdorff, and in fact, the Hausdorff spaces are
exactly the $T_1$ weakly Hausdorff spaces; all stably compact spaces
are weakly Hausdorff; and all preordered sets are weakly Hausdorff in
their \emph{Alexandroff topology}, the topology whose open sets are
the upwards-closed subsets.

Then a space is locally strongly sober if and only if it is sober,
coherent, and weakly Hausdorff \cite[Theorem~3.5]{JGL:wHaus}.

Although that will not be central to this work, we should define dcpos
(directed-complete partial orders).  Those are posets in which every
directed family has a supremum.  We will always equip such posets with
the Scott topology.  The latter is defined on all posets, as follows:
the (Scott-)\emph{open} subsets of a poset $P$ are the upwards-closed
subsets $U$ such that for every directed family $D$ whose supremum
exists and is in $U$, $D$ intersects $U$.

\paragraph{Related work}
The most closely related result to this work is \emph{Steenrod's
  theorem}, as stated by Fujiwara and Kato
\cite[Theorem~2.2.20]{FK:rigid:geometry}: every projective limit,
taken in $\Topcat$, of compact sober spaces is compact.  We will see
that this implies that the class of compact sober spaces is
projective.  Fujiwara and Kato apparently call this Steenrod's theorem
because of a similar claim by Steenrod
\cite[Theorem~2.1]{Steenrod:univ:homol}, see the discussion at the
beginning of Section~7 of \cite{JGL:kolmogorov}.  A very useful lemma
that comes naturally with that result is the following, which appears
as Lemma~7.5 in \cite{JGL:kolmogorov}.
\begin{lemma}
  \label{lemma:steenrod:open}
  Let $Q, {(p_i)}_{i \in I}$ be the canonical projective limit of a
  projective system
  ${(p_{ij} \colon Q_j \to Q_i)}_{i \sqsubseteq j \in I}$ of compact
  sober spaces.  For every $i \in I$, for every open neighborhood $U$
  of $\upc p_i [Q]$ in $Q_i$, there is an index $j \in I$ such that
  $i \sqsubseteq j$ and $\upc p_{ij} [Q_j] \subseteq U$.
\end{lemma}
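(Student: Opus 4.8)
The plan is to prove the contrapositive: assuming $\upc p_{ij}[Q_j] \not\subseteq U$ for every $j \sqsupseteq i$, I will manufacture a point of $Q$ whose $i$-th coordinate lies outside $U$, contradicting $\upc p_i[Q] \subseteq U$. First I would record two elementary reductions. Since $U$ is open it is upward closed for the specialization preordering, so $\upc p_{ij}[Q_j] \subseteq U$ is equivalent to $p_{ij}[Q_j] \subseteq U$; hence the negated hypothesis says exactly that the closed set $F \eqdef Q_i \diff U$ meets $p_{ij}[Q_j]$, i.e.\ that $G_j \eqdef p_{ij}^{-1}(F)$ is non-empty, for every $j \sqsupseteq i$. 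Each $G_j$ is a closed subspace of the compact sober space $Q_j$, hence is again compact and sober.

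Next I would assemble the $G_j$ into a projective system. For $i \sqsubseteq j \sqsubseteq k$ the identity $p_{ik} = p_{ij} \circ p_{jk}$ gives $G_k = p_{jk}^{-1}(G_j)$, so $p_{jk}$ restricts to a continuous map $G_k \to G_j$, and these restrictions inherit the cocycle identities. Thus ${(p_{jk}\colon G_k \to G_j)}_{i \sqsubseteq j \sqsubseteq k}$ is a projective system of non-empty compact sober spaces, indexed by the up-set $\{j \in I \mid i \sqsubseteq j\}$, which is directed and cofinal in $I$. Because a limit over a cofinal subsystem agrees with the full limit, the canonical limit of this system is homeomorphic to the closed subspace $\{\vec x \in Q \mid x_i \in F\}$ of $Q$ (for a thread $\vec x \in Q$ with $x_i \in F$ one has $p_{ij}(x_j) = x_i \in F$, hence $x_j \in G_j$ for all $j \sqsupseteq i$, and conversely).

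The crux is then to show that this limit is non-empty. This is where Steenrod's theorem enters, in its sharper form: \emph{a projective limit of non-empty compact sober spaces is non-empty}. The statement quoted above delivers compactness of the limit, and the non-emptiness is its natural companion; it can be established by combining that compactness with the well-filteredness of sober spaces (each filtered family of non-empty compact saturated subsets, such as the eventual images $\bigcap_{k \sqsupseteq j} \upc p_{jk}[G_k]$, has non-empty intersection). Granting this, I pick $\vec x$ in the limit; then $x_i \in F = Q_i \diff U$, while at the same time $x_i \in p_i[Q] \subseteq \upc p_i[Q] \subseteq U$, a contradiction. Hence some $G_j$ must be empty, that is $p_{ij}[Q_j] \subseteq U$, and therefore $\upc p_{ij}[Q_j] \subseteq U$ for that $j$.

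I expect the non-emptiness of the limit to be the main obstacle. Unlike the Hausdorff case, the ``equalizer'' subsets $\{\vec x \mid p_{jk}(x_k) = x_j\}$ of the product need not be closed, so the classical finite-intersection-property argument breaks down, and it is exactly here that compactness of the factors is indispensable: the excerpt already notes that limits of non-empty spaces with surjective bonding maps can be empty when the factors fail to be compact. One must therefore genuinely exploit sobriety, through well-filteredness, together with the compactness furnished by Steenrod's theorem. Everything else — the upward-closedness of $U$, the stability of compactness and sobriety under passage to closed subspaces, and the cofinality reduction — is routine.
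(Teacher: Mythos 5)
Your argument is, in structure, exactly the paper's own proof of this lemma (the paper does not reprove it here; it cites Lemma~7.5 of the reference on products and projective limits of continuous valuations, whose proof is the one you reconstruct): negate the conclusion, pass to $F \eqdef Q_i \diff U$ and the closed preimages $G_j \eqdef p_{ij}^{-1}(F)$, observe that these form a projective system of non-empty compact sober spaces over the directed cofinal set $\upc i$, invoke the non-emptiness half of Steenrod's theorem to get a point of its limit, and contradict $\upc p_i[Q] \subseteq U$. All the steps you call routine are indeed correct: openness of $U$ makes it saturated, so $\upc p_{ij}[Q_j] \subseteq U$ iff $p_{ij}[Q_j] \subseteq U$; closed subspaces of compact sober spaces are compact and sober; $G_k = p_{jk}^{-1}(G_j)$; and cofinality identifies the limit of the subsystem with $\{\vec x \in Q \mid x_i \in F\}$.

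The one point that needs repair is your justification of the crux. Invoking the statement ``a projective limit of non-empty compact sober spaces is non-empty'' is legitimate---this is precisely the form of Steenrod's theorem that the cited source states and uses, even though the present paper quotes only the compactness half---so, read as a citation, your proof is complete and coincides with the paper's. But the parenthetical argument you offer for that statement does not work as written. Well-filteredness of the factors does show that each eventual image $T_j \eqdef \bigcap_{k \sqsupseteq j} \upc p_{jk}[G_k]$ is non-empty (and, with a further well-filteredness argument, that $\upc p_{jk}[T_k] = T_j$), but this falls short of producing a point of the limit: from $x \in T_j$ one only obtains points $z \in T_k$ with $p_{jk}(z) \leq x$, not with $p_{jk}(z) = x$, so neither a recursion along a countable cofinal chain nor another appeal to compactness assembles a thread. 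The known proof is of a different nature: apply Zorn's lemma to families ${(Z_j)}_j$ of non-empty closed subsets $Z_j \subseteq G_j$ satisfying $p_{jk}[Z_k] \subseteq Z_j$, ordered by componentwise reverse inclusion (chains have upper bounds because componentwise intersections are non-empty, by compactness); show that in a minimal such family each $Z_j$ is irreducible and equals $\overline{p_{jk}[Z_k]}$ for every $k \sqsupseteq j$; and then use sobriety---generic points of irreducible closed sets, not merely well-filteredness---to extract the thread of generic points. So: the lemma is correctly proved modulo a correctly identified, citable theorem, but the sketch you give of that theorem's proof would have to be replaced by the Zorn-plus-generic-points argument.
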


There are other papers on the question of projective classes, for
example
\cite{Nagami:invlim:paracompact,Nogura:invlim:frechet,FP:invlim:min},
but all are concerned with classes of Hausdorff spaces; Camargo and
Uzc\'ategui \cite{CU:invlim:q+} are concerned about
$\omega$-projective limits of specific classes of regular spaces.

We will exclusively be concerned with projective systems whose bonding
maps are continuous.  Stone establishes theorems on projective limits
of compact $T_1$ spaces and \emph{closed} continuous bonding maps
\cite{Stone:limproj:compact}, and that is different.  (We will borrow
Example~\ref{exa:Stone} from Stone, though.)  Apart from stating and
proving Steenrod's theorem, Fujiwara and Kato \cite[Proposition~2.2.9,
Theorem~2.2.10]{FK:rigid:geometry} focus on projective systems of
spectral spaces and \emph{spectral} maps (i.e., maps such that the
inverse image of a compact-open set is compact-open).  We mention the
latter because they call spectral spaces ``coherent sober'' and the
theorems just cited state that coherent sober spaces form a projective
class, which may seem to be exactly what we say in
Section~\ref{sec:coherence}.  But, first, coherent has a different
meaning there, and second, they assume the bonding maps to be
spectral, while we only require them to be continuous.

One should also mention a very special case of projective systems that
occurs frequently in denotational semantics, that of ep-systems.  They
form the basic tool for Scott's celebrated construction of the
$D_\infty$ model of the pure $\lambda$-calculus \cite{scott69} and for
Plotkin's bifinite domains \cite{plotkin81}, and are the key to the
construction of recursive domain equations, giving dcpo semantics to
data types in higher-order programming languages
\cite[Section~3.3]{AJ:domains}.  An \emph{embedding-projection pair},
or \emph{ep-pair} for short, is a pair $(e, p)$ of continuous maps
$\xymatrix{X \ar@<1ex>[r]^e & Y \ar@<1ex>[l]^p}$ such that
$p \circ e = \identity X$ and $e \circ p \leq \identity Y$.  Then $p$
is called a \emph{projection} of $Y$ onto $X$, and $e$ is the
associated \emph{embedding}.  In general, we call a continuous map
$p \colon Y \to X$ a \emph{projection} if and only if it is the
projection part of some ep-pair.  If that is the case, and if $Y$ is
$T_0$, then the associated embedding $e \colon X \to Y$ must be such
that, for every $x \in X$, $e (x)$ is the smallest element $y \in Y$
such that $x \leq p (y)$, so that $e$ is determined uniquely from $p$.
We will call \emph{ep-system} any projective system in the category of
topological spaces and ep-pairs.  When every $X_i$ is $T_0$, this is
equivalent to an ordinary projective system
${(p_{ij} \colon X_j \to X_i)}_{i \sqsubseteq j \in I}$ where all
bonding maps $p_{ij}$ are projections, since the corresponding
embeddings are determined uniquely.  Some of our counter-examples will
not just be projective systems, but ep-systems
(Example~\ref{exa:knijnenburg:wH}, Example~\ref{exa:knijnenburg:coh}).

\section{Sobriety, compactness, local compactness}
\label{sec:sobr-comp-local}

Any limit of sober spaces, taken in $\Topcat$, is sober \cite[Theorem
8.4.13]{JGL-topology}.  In particular, the class of sober spaces is
projective.

Combining this with Steenrod's theorem, we obtain the following.
\begin{fact}
  \label{fact:steenrod}
  The class of compact sober spaces is projective.
\end{fact}

Despite this, compact spaces do not form a projective class, and not
even an $\omega$-projective class.  The following counterexample is
due to Stone.
\begin{example}[Example~3 of \cite{Stone:limproj:compact}]
  \label{exa:Stone}
  We let $X_n$ be $\nat$ for every natural number $n$ and
  $p_{mn} \colon X_n \to X_m$ be the identity map for all $m \leq n$.
  The topology on $X_n$ is obtained by declaring a subset $C$ closed
  if and only if $C \cap \{n, n+1, \cdots\}$ is finite or equal to the
  whole of $\{n, n+1, \cdots\}$.  In other words, $X_n$ is isomorphic
  to the disjoint sum of $\{0, 1, \cdots, n\}$ with the discrete
  topology with $\{n, n+1, \cdots\}$ with the cofinite topology.  Then
  each $X_n$ is compact, but the projective limit is $\nat$ with the
  discrete topology, which is not.
\end{example}

Alongside with Stone, we note that each of the spaces $X_n$ is even
\emph{Noetherian}, namely it satisfies any one of the following
equivalent properties \cite[Section~9.7]{JGL-topology}: it is
hereditarily compact, namely every subspace is compact; every open
subset is compact; every monotonic chain of open subsets
$U_0 \subseteq U_1 \subseteq \cdots \subseteq U_n \subseteq \cdots$
stabilizes (all the sets $U_n$ are equal for $n$ large enough); there
is no infinite strictly decreasing sequence of closed subsets.

\begin{remark}
  \label{rem:id:bond}
  Stone's counterexample has a particular form: all the bonding maps
  are identity maps.  Any projective system
  ${(p_{ij} \colon X_j \to X_i)}_{i \sqsubseteq j \in I}$ where each
  $p_{ij}$ is the identity map, and where therefore each $X_i$ is the
  same set $E$, with a possibly different topology $\tau_i$, has a
  projective limit which is just $E$ itself, with topology
  $\bigvee_{i \in I} \tau_i$, the coarsest topology finer than every
  $\tau_i$.  (The cone maps $p_i \colon E \to X_i$ are identity maps.)
  In brief, directed suprema of topologies are projective limits.
\end{remark}

We now deal with local compactness.  We will use the following remark.
\begin{remark}
  \label{rem:prod}
  In a category $\catc$ with finite products, a product
  $\prod_{i \in I} X_i$ is the same thing as a projective limit of
  ${(p_{JK} \colon \prod_{i \in K} X_i \to \prod_{i \in J} X_i)}_{J
    \subseteq K \in \Pfin (I)}$, where $\Pfin (I)$ denotes the set of
  finite subsets of $I$, and $p_{JK}$ is the obvious map.
\end{remark}

We recall that Baire space $\nat^\nat$ is the product of countably
many copies of $\nat$, each one with the discrete topology.  It is
well-known that $\nat^\nat$ is not locally compact, because the
interior of every compact subset is empty
\cite[Example~4.8.12]{JGL-topology}.
\begin{proposition}
  \label{prop:Baire}
  The classes of locally compact spaces, of locally compact sober
  spaces, of stably locally compact spaces, of locally compact
  Hausdorff spaces, are not $\omega$-projective.
\end{proposition}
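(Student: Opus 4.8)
The plan is to exhibit Baire space $\nat^\nat$ as an $\omega$-projective limit of spaces lying in each of the four classes, and then invoke the fact, recalled just before the statement, that $\nat^\nat$ is not locally compact. Since local compactness is a prerequisite for membership in any of the four classes, this single example will witness the failure of $\omega$-projectivity for all of them simultaneously. The key reduction is Remark~\ref{rem:prod}: a product is a projective limit of its finite subproducts, so it suffices to check that the finite subproducts of copies of discrete $\nat$ belong to the classes in question.

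First I would fix each factor to be $\nat$ with the discrete topology, and verify that this space lies in all four classes. It is trivially locally compact, every singleton being a compact-open neighborhood, and it is Hausdorff. It is sober: every subset is closed, and an irreducible closed set cannot contain two distinct points $a, b$, since it would then split as $\{a\} \cup (C \diff \{a\})$; hence the irreducible closed sets are exactly the singletons $\dc x$. It is coherent, since the compact saturated subsets are precisely the finite sets, and finite intersections of finite sets are finite. Thus discrete $\nat$ is locally compact Hausdorff and stably locally compact at once. Finite products of discrete spaces are again discrete, so each finite subproduct $\prod_{i \in J} \nat$ (for $J \in \Pfin(\nat)$) inherits all of these properties, and the bonding projections $p_{JK}$ are continuous.

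To see that the resulting projective system is genuinely $\omega$-projective, I would note that the index set $\Pfin(\nat)$, ordered by inclusion, is directed and countable, hence admits a countable cofinal subset (for instance the chain $\{\{0,1,\dots,n\} \mid n \in \nat\}$). By Remark~\ref{rem:prod}, the canonical projective limit of this system is exactly the countable product $\nat^\nat$, i.e.\ Baire space. Since $\nat^\nat$ is not locally compact, it belongs to none of the four classes, whereas every space in the system does; this shows that none of the classes is closed under $\omega$-projective limits.

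There is essentially no hard step here: the content is the reduction via Remark~\ref{rem:prod} together with the already-cited non-local-compactness of $\nat^\nat$, and the only verification required is the routine observation that discrete spaces, and hence their finite products, satisfy sobriety, coherence, compactness of points, and the Hausdorff property. If a subtlety arises it will be purely bookkeeping, namely confirming that membership in the finer classes (stably locally compact, locally compact Hausdorff) really does reduce to local compactness plus the separation and coherence conditions already checked for discrete spaces, so that the failure of local compactness in the limit is decisive for each class.
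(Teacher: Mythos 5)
Your proposal is correct and follows essentially the same route as the paper's own proof: both reduce via Remark~\ref{rem:prod} to writing $\nat^\nat$ as an ($\omega$-)projective limit of its finite subproducts $\nat^k$, verify that discrete spaces are locally compact Hausdorff, sober, and coherent, and conclude from the known fact that Baire space is not locally compact. Your additional remarks (explicit countable cofinal chain in $\Pfin(\nat)$, spelled-out sobriety and coherence checks) are just elaborations of steps the paper cites or states tersely.
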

\begin{proof}
  Every finite power $\nat^k$ of $\nat$ is a discrete space, and as
  such is locally compact Hausdorff, hence in particular sober
  \cite[Proposition~8.2.12]{JGL-topology}.  It is also coherent, since
  its compact subsets are its finite subsets, hence it is stably
  locally compact.  But $\nat^\nat$ is not locally compact, and we
  conclude by Remark~\ref{rem:prod}.  \qed
\end{proof}

Compactness does not help.  We rely on the following construction: the
\emph{lifting} $X_\bot$ of a space $X$ is obtained by adjoining a
fresh point $\bot$ (``bottom'') to $X$, and declaring that the open
subsets of $X_\bot$ are those of $X$ plus $X_\bot$ itself.  $X_\bot$
is always a compact space \cite[Exercise~4.4.25]{JGL-topology}.  The
specialization preordering of $X_\bot$ is given by $x \leq y$ if and
only if $x \leq y$ in $X$ or $x=\bot$, the compact saturated subsets
of $X_\bot$ are those of $X$ plus $X_\bot$ itself
\cite[Exercise~4.4.26]{JGL-topology}.  As a result, $X_\bot$ is
locally compact if and only if $X$ is \cite[Exercise
4.8.6]{JGL-topology}.  Also, $X_\bot$ is sober, resp.\ coherent,
resp.\ stably locally compact if and only if $X$ is \cite[Exercise
8.2.9, Proposition~9.1.10]{JGL-topology}.



Lifting defines a functor $\__\bot$, whose application to any
continuous map $f \colon X \to Y$ yields
$f_\bot \colon X_\bot \to Y_\bot$ defined as $f$ on $X$ and mapping
$\bot$ to $\bot$.  We say that a category $\cati$ is \emph{connected}
if and only if it has at least one object, and every two objects are
connected by a zig-zag of morphisms; namely if and only if the
smallest equivalence relation $\sim$ on $|\cati|$ such that $i \sim j$
if there is a morphism from $i$ to $j$ in $\cati$ makes all objects
equivalent.
A \emph{connected limit} is the limit of a diagram over a connected
category $\cati$.
\begin{lemma}
  \label{lemma:projlim:lift}
  Lifting preserves connected limits, in particular projective limits.
\end{lemma}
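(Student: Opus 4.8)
The plan is to compare canonical limits directly and produce an explicit homeomorphism of cones. Let $F \colon \cati \to \Topcat$ be a diagram over a connected category $\cati$, let $X, {(p_i)}_{i \in |\cati|}$ be its canonical limit, and let $F_\bot$ denote the lifted diagram $i \mapsto F(i)_\bot$, with bonding morphisms $F(\varphi)_\bot$. Since lifting is a functor, $X_\bot, {((p_i)_\bot)}_{i \in |\cati|}$ is a cone of $F_\bot$: for every $\varphi \colon j \to i$ we have $F(\varphi)_\bot \circ (p_j)_\bot = (F(\varphi) \circ p_j)_\bot = (p_i)_\bot$. I would show this cone is a limit cone by identifying it with the canonical limit of $F_\bot$.

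First I would pin down the underlying set of the canonical limit $Q$ of $F_\bot$, namely the tuples $\vec y \in \prod_i F(i)_\bot$ with $F(\varphi)_\bot (y_j) = y_i$ for every $\varphi \colon j \to i$. The crucial observation is that being $\bot$ propagates along every morphism in both directions: since $F(\varphi)_\bot$ sends $\bot$ to $\bot$ and sends every non-$\bot$ point to a non-$\bot$ point, the equation $F(\varphi)_\bot (y_j) = y_i$ forces $y_j = \bot$ exactly when $y_i = \bot$. Because $\cati$ is connected, any two objects are joined by a zig-zag of morphisms, so either \emph{every} component $y_i$ equals $\bot$ or \emph{none} does. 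Hence $Q$ is the disjoint union of the single tuple $\vec\bot$ (all components $\bot$) and the set of tuples all of whose components lie in the respective $F(i)$, which is exactly the underlying set of $X = \lim F$. This gives a bijection sending $\bot \in X_\bot$ to $\vec\bot$ and each $x \in X$ to the corresponding tuple in $Q$.

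It then remains to check this bijection is a homeomorphism, which I would do by matching open sets. On the non-$\bot$ part, the subspace topology induced from $\prod_i F(i)_\bot$ agrees with the one induced from $\prod_i F(i)$, because intersecting a subbasic open $\{\vec y \mid y_i \in U_i\}$ with the non-$\bot$ tuples replaces $U_i$ by $U_i \cap F(i)$, an arbitrary open of $F(i)$; so the non-$\bot$ part of $Q$ is homeomorphic to $X$. For the point $\vec\bot$, the only open subset of $F(i)_\bot$ containing $\bot$ is $F(i)_\bot$ itself, so the only basic product-open containing $\vec\bot$ is the whole product, whence the only open subset of $Q$ containing $\vec\bot$ is $Q$ itself. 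This is precisely the recipe defining the topology of $X_\bot$, so the bijection is a homeomorphism; one checks immediately that it commutes with the projections, making it an isomorphism of cones. Consequently $X_\bot, {((p_i)_\bot)}_i$ is a limit cone. Since every directed preordered set is connected when viewed as a category (an upper bound $k$ of $i$ and $j$ yields a zig-zag $i \leftarrow k \rightarrow j$, and directed sets are non-empty), projective limits are in particular connected limits, which gives the final clause.

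The only delicate point, and the main obstacle to keep in view, is the use of connectedness: it is exactly what rules out ``mixed'' tuples that are $\bot$ on some indices and non-$\bot$ on others. Without it the claim fails outright — for a two-object discrete diagram one obtains $X_\bot \times Y_\bot$, which strictly contains $(X \times Y)_\bot$ — so connectedness is not a technical convenience but the essential hypothesis driving the argument.
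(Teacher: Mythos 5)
Your proof is correct, but it takes a genuinely different route from the paper's. The paper verifies the universal property directly: given an arbitrary cone $Y, {(q_i)}_{i\in|\cati|}$ of the lifted diagram, it shows that $V \eqdef q_i^{-1}(X_i)$ is an open subset of $Y$ that is independent of $i$ (this is exactly where connectedness enters), then that any mediating map must send $V$ into $X$ via the map given by the universal property of $X$ and send everything else to $\bot$, and finally that this prescription is continuous. You instead compute the canonical limit $Q$ of the lifted diagram set-theoretically --- connectedness entering as your all-or-nothing dichotomy on tuples --- and exhibit an explicit isomorphism of cones between $X_\bot$ and $Q$. The two uses of connectedness are the same idea in two guises: independence of $q_i^{-1}(X_i)$ from $i$ in the paper, propagation of $\bot$ along zig-zags in your version. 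What your approach buys is concreteness: the limit of the lifted system is identified outright, and your closing observation that the statement fails for a two-object discrete diagram (where one gets $X_\bot \times Y_\bot$, strictly larger than $(X\times Y)_\bot$) falls out of the very same computation. What the paper's approach buys is that it never needs to describe the underlying set of the limit, and it works verbatim with an arbitrary limit cone of $F$ rather than the canonical one; in your setup this reduction is covered by functoriality of lifting plus uniqueness of limits up to isomorphism, which you use tacitly and would do well to state.

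One step you should make explicit: to conclude that your bijection is a homeomorphism, it is not enough that the subspace topology on the non-$\bot$ part of $Q$ is that of $X$ and that the only open subset of $Q$ containing $\vec\bot$ is $Q$ itself; you also need the non-$\bot$ part to be \emph{open} in $Q$, since otherwise the opens of $X$ need not be traces of opens of $Q$, and the bijection would be continuous without being open. This is immediate from facts you already have: fixing any object $i$, the non-$\bot$ part of $Q$ equals $Q \cap \pi_i^{-1}(F(i))$ by your propagation argument, and $F(i)$ is open in $F(i)_\bot$. But it is the one point where the sentence ``this is precisely the recipe defining the topology of $X_\bot$'' glosses over something that has to be checked.
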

\begin{proof}
  Let $F \colon \cati \to \Topcat$ be a diagram, where $\cati$ is
  connected, and let us write $X_i$ for $F (i)$, $i \in |\cati|$.  Let
  $X, {(p_i)}_{i \in |\cati|}$ be a limit of $F$.  It is clear that
  $X_\bot, {(p_{i\bot})}_{i \in |\cati|}$ is a cone of
  $\__\bot \circ F$.

  Let $Y, {(q_i)}_{i \in I}$ be any cone of $\__\bot \circ F$.  For
  every $i \in |\cati|$, $X_i$ is open in $X_{i\bot}$, so
  $q_i^{-1} (X_i)$ is an open subset of $Y$.  We claim that it is
  independent of $i$.  For every morphism $\varphi \colon j \to i$ in
  $\cati$, we have $F (\varphi)_\bot \circ q_j = q_i$, so
  $q_i^{-1} (X_i) = q_j^{-1} (F (\varphi)_\bot^{-1} (X_i))$; by
  definition of $\__\bot$, $F (\varphi)_\bot^{-1} (X_i) = X_j$, so
  $q_i^{-1} (X_i) = q_j^{-1} (X_j)$.  Since $\cati$ is connected, it
  follows that $q_i^{-1} (X_i) = q_j^{-1} (X_j)$ for every pair of
  objects $i$ and $j$ of $\cati$.

  Since $\cati$ has at least one object, it makes sense to define $V
  \eqdef q_i^{-1} (X_i)$, where $i$ is any arbitrary object of
  $\cati$.

  Any continuous map $f \colon Y \to X_\bot$ such that
  $p_{i\bot} \circ f = q_i$ for every object $i$ of $\cati$ must be
  such that
  $V = q_i^{-1} (X_i) = f^{-1} (p_{i\bot} (X_i)) = f^{-1} (X)$.  Hence
  $f$ must map $V$ to $X$, and since $p_{i\bot}$ coincides with $p_i$
  on $V$, the restriction $f_{|V}$ must be such that
  $p_i \circ f_{|V} = q_i$ for every $i \in |\cati|$.  By the
  universal property of the limit $X, {(p_i)}_{i \in |\cati|}$, this
  determines $f_{|V}$ uniquely; and $f (y)$ must be equal to $\bot$
  for every $y \in Y \diff V$.  This shows the uniqueness of $f$.

  As for existence, let us consider $V$ as a subspace of $Y$.  The
  restrictions $q_{i|V} \colon V \to X_{i\bot}$ have images included
  in $X_i$, by definition of $V$ as $q_i^{-1} (X_i)$, and we will
  consider them as maps from $V$ to $X_i$.  For every open subset $U$
  of $X_i$, $U$ is open in $X_{i\bot}$, too, and then
  $q_{i|V}^{-1} (U) = q_i^{-1} (U) \cap V$ is open in $V$.  For every
  morphism $\varphi \colon j \to i$ in $\cati$,
  $F (\varphi) \circ q_{j|V}$ maps every point $y$ of $V$ to
  $F (\varphi) (q_j (y))$, which is also equal to
  $F (\varphi)_\bot (q_j (y))$, hence to $q_i (y) = q_{i|V} (y)$.
  Therefore $V, {(q_{i|V})}_{i \in |\cati|}$ is a cone over the
  diagram $F$.  By the universal property of
  $X, {(p_i)}_{i \in |\cati|}$, there is a (unique) continuous map
  $g \colon V \to X$ such that $p_i \circ g = q_{i|V}$ for every
  $i \in |\cati|$.  We define $f \colon Y \to X_\bot$ so that
  $f (y) \eqdef g (y)$ for every $y \in V$ and $f (y) \eqdef \bot$ for
  every $y \in Y \diff V$.  The map $f$ is continuous:
  $f^{-1} (X_\bot)=Y$ is obviously open, and for every open subset $U$
  of $X_\bot$ other than $X_\bot$ itself, $U$ is an open subset of
  $X$, and then $f^{-1} (U) = g^{-1} (U)$ is open in $V$, hence in
  $Y$, since $V$ is itself open in $Y$.
  It is easy to see that $p_{i\bot} \circ f = q_i$ for every $i \in
  |\cati|$, which concludes the proof.  \qed
\end{proof}

\begin{corollary}
  \label{corl:Baire:lift}
  The classes of compact locally compact spaces, of compact locally
  compact sober spaces, of stably compact spaces, are not
  $\omega$-projective.
\end{corollary}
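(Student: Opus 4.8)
The plan is to transport the $\omega$-projective system underlying Proposition~\ref{prop:Baire} through the lifting functor $\__\bot$, using the fact that $X_\bot$ is always compact to recover the compactness that Baire space itself lacked. Recall that, by Remark~\ref{rem:prod}, Baire space $\nat^\nat$ is the projective limit of the system ${(p_{JK} \colon \nat^K \to \nat^J)}_{J \subseteq K \in \Pfin (\nat)}$ of finite powers of the discrete space $\nat$, indexed by the directed poset $\Pfin (\nat)$ (ordered by inclusion, directed by union). This poset is countable, so the system is $\omega$-projective, and each finite power $\nat^k$ is discrete, hence locally compact, sober, and coherent, i.e., stably locally compact.

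First I would apply $\__\bot$ to the entire system, obtaining an $\omega$-projective system ${((p_{JK})_\bot \colon (\nat^K)_\bot \to (\nat^J)_\bot)}_{J \subseteq K \in \Pfin (\nat)}$ over the same index set. The index category of a directed poset is connected, since any two indices $J$ and $K$ are joined through their common upper bound $J \cup K$; hence Lemma~\ref{lemma:projlim:lift} applies and shows that the projective limit of the lifted system is $(\nat^\nat)_\bot$.

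Next I would verify membership and non-membership against each of the three classes. By the recalled facts about lifting, $(\nat^k)_\bot$ is sober, coherent, and locally compact precisely because $\nat^k$ is, and $(\nat^k)_\bot$ is compact for free; therefore each $(\nat^k)_\bot$ is simultaneously a compact locally compact space, a compact locally compact sober space, and (being sober, locally compact, coherent, and compact) a stably compact space. On the other hand, since $\nat^\nat$ is not locally compact and $X_\bot$ is locally compact if and only if $X$ is, the limit $(\nat^\nat)_\bot$ is not locally compact; it remains compact and sober, but it lies in none of the three classes, exactly because local compactness fails.

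I do not anticipate a genuine obstacle: the argument is simply Proposition~\ref{prop:Baire} pushed along the limit-preserving functor $\__\bot$. The only step that deserves explicit attention is checking the hypothesis of Lemma~\ref{lemma:projlim:lift}, namely that the index category is connected, and this is immediate from directedness. Everything else is the routine observation that the stated lifting facts preserve precisely the properties defining each class, while the unconditional compactness of $X_\bot$ supplies the missing ingredient, so that the conclusion follows for all three classes at once.
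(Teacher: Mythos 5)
Your proposal is correct and takes essentially the same route as the paper: lift the finite-powers-of-$\nat$ system of Remark~\ref{rem:prod} through $\__\bot$ via Lemma~\ref{lemma:projlim:lift}, observe that each ${(\nat^k)}_\bot$ is stably compact, and conclude that the limit ${(\nat^\nat)}_\bot$ fails local compactness. The only cosmetic difference is the last step: you invoke the recalled fact that $X_\bot$ is locally compact if and only if $X$ is, whereas the paper re-proves that instance directly by noting that every compact subset of ${(\nat^\nat)}_\bot$ other than the whole space lies in $\nat^\nat$ and hence has empty interior; both justifications are available in the paper's preliminaries.
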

\begin{proof}
  By Remark~\ref{rem:prod} and Lemma~\ref{lemma:projlim:lift},
  ${(\nat^\nat)}_\bot$ is the projective limit of spaces of the form
  ${(\nat^k)}_\bot$, which are all stably compact, since $\nat^k$ is
  stably locally compact.  The only compact subset $K$ of
  ${(\nat^\nat)}_\bot$ with non-empty interior is the whole space: all
  others are included in $\nat^\nat$, and therefore are compact
  subsets of Baire space.  Hence ${(\nat^\nat)}_\bot$ is not locally
  compact.  \qed
\end{proof}

\section{Coherence}
\label{sec:coherence}

We will see that coherent sober spaces form a projective class.  We
first need the following remarks.

\begin{remark}
  \label{rem:sat:sober}
  Any saturated subset of a sober space $X$ is sober, when seen as a
  subspace.  Indeed, the sober subspaces of $X$ coincide with the
  subsets that are closed in the strong topology
  \cite[Corollary~3.5]{KL:dcompl}.  The latter is also known as the
  Skula topology, and is the smallest one generated by the original
  topology on $X$ and all the downwards-closed subsets.  In
  particular, any saturated subset of $X$ is upwards-closed, hence the
  complement of a downwards-closed subset, hence closed in the strong
  topology.
\end{remark}

\begin{remark}
  \label{rem:prohorov:top}
  Let ${(p_{ij} \colon X_j \to X_i)}_{i \sqsubseteq j \in I}$ be a
  projective system of topological spaces, with projective limit
  $X, {(p_i)}_{i \in I}$, and let $U$ be any open subset of $X$.  For
  every $i \in I$, there is a largest open subset $U_i$ of $X_i$ such
  that $p_i^{-1} (U_i) \subseteq U$.  Then, for all $i \sqsubseteq j$,
  $p_i^{-1} (U_i) \subseteq p_j^{-1} (U_j)$, so the union
  $\bigcup_{i \in I} p_i^{-1} (U_i)$ is directed---we will write
  $\dcup_{i \in I} p_i^{-1} (U_i)$ to stress this---, and
  $U = \dcup_{i \in I} p_i^{-1} (U_i)$
  \cite[Lemma~3.1]{JGL:kolmogorov}.
\end{remark}

The following lemma essentially says that the compact saturated
subsets of a projective limit $X$ of sober spaces $X_i$ are simply the
projective limits of compact saturated subsets of each $X_i$.  We
consider each subset $Q_i$ as a subspace, and we note that it is
compact as a subset if and only if it is compact as a subspace.
\begin{lemma}
  \label{lemma:limproj:compact}
  Let ${(p_{ij} \colon X_j \to X_i)}_{i \sqsubseteq j \in I}$ be a
  projective system of sober spaces and $X, {(p_i)}_{i \in I}$ be its
  canonical projective limit.  Then:
  \begin{enumerate}
  \item every canonical projective limit of
    ${(p_{ij|Q_j} \colon Q_j \to Q_i)}_{i \sqsubseteq j \in I}$, where
    each $Q_i$ is compact saturated in $X_i$ and
    $p_{ij} [Q_j] \subseteq Q_i$ for all $i \sqsubseteq j \in I$, is a
    compact saturated subset of $X$;
  \item every compact saturated subset $K$ of $X$ can be written as
    such a projective limit, where $Q_i \eqdef \upc p_i [K]$ for every
    $i \in I$, and then $\upc p_{ij} [Q_j] = Q_i$ for all
    $i \sqsubseteq j \in I$;
  \end{enumerate}
\end{lemma}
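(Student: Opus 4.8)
The plan is to treat the two items separately, basing item~(1) on Steenrod's theorem and item~(2) on Remark~\ref{rem:prohorov:top}.

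For item~(1), I would first observe that each $Q_i$, being compact and saturated in the sober space $X_i$, is itself sober as a subspace by Remark~\ref{rem:sat:sober}. The restricted maps $p_{ij|Q_j} \colon Q_j \to Q_i$ are well defined (since $p_{ij}[Q_j] \subseteq Q_i$) and continuous, so ${(p_{ij|Q_j} \colon Q_j \to Q_i)}_{i \sqsubseteq j}$ is a projective system of compact sober spaces. By Fact~\ref{fact:steenrod}, its canonical projective limit $Q$ is again compact sober, in particular compact. It then remains to identify $Q$ with a subset of $X$ carrying the correct topology: as a set, $Q = \{\vec x \in X \mid x_i \in Q_i \text{ for all } i\}$, and since the product topology on $\prod_i Q_i$ is the subspace topology inherited from $\prod_i X_i$, transitivity of subspace topologies shows that the limit topology on $Q$ coincides with the subspace topology it inherits from $X$. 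Hence $Q$ is compact as a subset of $X$. Saturation is immediate: the specialization preordering on $X$ is computed componentwise, so if $\vec x \in Q$ and $\vec x \leq \vec y$ in $X$, then $x_i \leq y_i$ with $x_i \in Q_i$ forces $y_i \in Q_i$ because each $Q_i$ is upwards closed; thus $\vec y \in Q$.

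For item~(2), set $Q_i \eqdef \upc p_i[K]$. Since $K$ is compact and $p_i$ is continuous, $p_i[K]$ is compact, hence $Q_i$ is compact saturated. Monotonicity of $p_{ij}$ together with $p_{ij} \circ p_j = p_i$ gives $p_{ij}[Q_j] \subseteq Q_i$, so these data fall under item~(1) and define a compact saturated projective limit $Q \subseteq X$. The inclusion $K \subseteq Q$ is easy: for $\vec x \in K$, each component $x_i = p_i(\vec x)$ lies in $p_i[K] \subseteq Q_i$. The hard part will be the reverse inclusion $Q \subseteq K$, and this is where I would use Remark~\ref{rem:prohorov:top} and the compactness of $K$. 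I would argue by contraposition: if $\vec x \notin K$, then by saturation of $K$ there is an open $U \supseteq K$ with $\vec x \notin U$. Writing $U = \dcup_{i} p_i^{-1}(U_i)$ as a directed union and using compactness of $K \subseteq U$, some single index $i$ satisfies $K \subseteq p_i^{-1}(U_i)$, i.e.\ $p_i[K] \subseteq U_i$; since $U_i$ is open, hence upwards closed, this yields $Q_i = \upc p_i[K] \subseteq U_i$. On the other hand $\vec x \notin U \supseteq p_i^{-1}(U_i)$ forces $x_i \notin U_i$, so $x_i \notin Q_i$ and therefore $\vec x \notin Q$. This proves $Q \subseteq K$, whence $K = Q$.

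Finally, for the identity $\upc p_{ij}[Q_j] = Q_i$: the inclusion $\subseteq$ follows from $p_{ij}[Q_j] \subseteq Q_i$ and saturation of $Q_i$, while for $\supseteq$ I would note that every $p_i(k) = p_{ij}(p_j(k))$ with $p_j(k) \in Q_j$ lies in $p_{ij}[Q_j]$, so $p_i[K] \subseteq p_{ij}[Q_j]$ and hence $Q_i = \upc p_i[K] \subseteq \upc p_{ij}[Q_j]$. The only genuinely delicate point is the inclusion $Q \subseteq K$ in item~(2); everything else is either a direct appeal to Steenrod's theorem or a routine componentwise check.
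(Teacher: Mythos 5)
Your proof is correct, and for item~(1) it follows the paper's route exactly: restrict the bonding maps, observe via Remark~\ref{rem:sat:sober} that each $Q_i$ is a compact sober space, and invoke Fact~\ref{fact:steenrod}; you additionally spell out two points the paper dismisses as clear, namely that the limit topology on $Q$ coincides with its subspace topology in $X$, and that $Q$ is upwards closed in $X$ because specialization on $X$ is componentwise. The genuine difference is in item~(2). The paper proves two auxiliary claims there: $(*)$ $\upc p_{ij}[Q_j]=Q_i$, and $(**)$ $\upc p_i[Q]=Q_i$, where the nontrivial half of $(**)$ is obtained by applying Lemma~\ref{lemma:steenrod:open} to the projective system of the $Q_j$'s; the final step $Q\subseteq K$ is then justified by citing $(**)$. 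Your contrapositive argument dispenses with $(**)$, and hence with Lemma~\ref{lemma:steenrod:open}, entirely: to conclude $\vec x\notin Q$ from $x_i\notin U_i\supseteq Q_i$ you only need the trivial inclusion $p_i[Q]\subseteq Q_i$, which holds by definition of the canonical limit. In fact that trivial inclusion is all the paper's final step really extracts from $(**)$, so your version exposes that detour as removable; what the paper's longer route buys is the stronger structural fact that the limit projects onto each $Q_i$ up to saturation, a fact of independent interest whose proof pattern (applying Lemma~\ref{lemma:steenrod:open} to a limit of compact saturated sets) reappears in the proof of Theorem~\ref{thm:projective:lssober}. A second, smaller difference: you derive $p_{ij}[Q_j]\subseteq Q_i$ and the equality $\upc p_{ij}[Q_j]=Q_i$ from monotonicity of the continuous maps $p_{ij}$, where the paper argues through open neighborhoods of the saturated set $Q_i$; both are valid, and yours is the more direct. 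The net effect is that the heart of your item~(2), the equality $K=Q$, uses only Remark~\ref{rem:prohorov:top} together with compactness and saturation of $K$, and needs neither sobriety nor Lemma~\ref{lemma:steenrod:open}; only item~(1) still rests on Steenrod's theorem.
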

\begin{proof}
  1.  Let $Q_i$ be compact saturated subsets of $X_i$ for each
  $i \in I$ such that $p_{ij} [Q_j] \subseteq Q_i$ for all
  $i \sqsubseteq j \in I$.  Then $p_{ij|Q_j}$ is a continuous map from
  $Q_j$ to $Q_i$.  Also, since each $X_i$ is sober, its subspace $Q_i$
  is compact and sober by Remark~\ref{rem:sat:sober}.  By Steenrod's
  theorem (Fact~\ref{fact:steenrod}), its canonical projective limit
  $Q$ is a compact sober space.  Clearly, $Q$ is a subset of $X$, and
  we have just shown that it is a compact subset of $X$.

  2.  We fix an arbitrary compact saturated subset $K$ of $X$.  Let
  $Q_i \eqdef \upc p_i [K]$: $Q_i$ is compact saturated in $X_i$ for
  every $i \in I$, and we will show that $K$ is equal to the canonical
  projective limit of
  ${(p_{ij|Q_j} \colon Q_j \to Q_i)}_{i \sqsubseteq j \in I}$.

  We first need to verify that the latter is a family of maps from
  $Q_j$ to $Q_i$.  We show more, namely: $(*)$ for all
  $i \sqsubseteq j \in I$, $\upc p_{ij} [Q_j] = Q_i$.  In one
  direction,
  $\upc p_{ij} [Q_j] \supseteq \upc p_{ij} [p_j [K]] = \upc p_i [K] =
  Q_i$.  In the other direction, we show that every open neighborhood
  $U$ of $Q_i$ contains $p_{ij} [Q_j]$.  Since
  $Q_i = \upc p_i [K] \subseteq U$, $K$ is included in $p_i^{-1} (U)$,
  namely in $p_j^{-1} (p_{ij}^{-1} (U))$.  Therefore
  $p_i [K] \subseteq p_{ij}^{-1} (U)$.  Since $p_{ij}^{-1} (U)$ is
  open hence upwards-closed, it follows that
  $Q_j = \upc p_i [K] \subseteq p_{ij}^{-1} (U)$, hence that
  $p_{ij} [Q_j] \subseteq U$.

  Let $Q$ be the canonical projective limit of
  ${(p_{ij|Q_j} \colon Q_j \to Q_i)}_{i \sqsubseteq j \in I}$.  We
  note that each $Q_i$ is a compact space, seen as a subspace of
  $X_i$, and is sober because $X_i$ is, relying on
  Remark~\ref{rem:sat:sober}.  This will allow us to use
  Lemma~\ref{lemma:steenrod:open}.

  We claim that: $(**)$ $\upc p_i [Q] = Q_i$ for every $i \in I$.  By
  definition, $p_i [Q] \subseteq Q_i$, so
  $\upc p_i [Q] \subseteq Q_i$.  We show the reverse inclusion by
  showing that every open neighborhood $U$ of $\upc p_i [Q]$ in $X_i$
  contains $Q_i$.  Since $\upc p_i [Q] \subseteq U$ and
  $\upc p_i [Q] \subseteq Q_i$, $U \cap Q_i$ is an open neighborhood
  of $\upc p_i [Q]$ in $Q_i$.  By Lemma~\ref{lemma:steenrod:open},
  there is an index $j \in I$ such that $i \sqsubseteq j$ and
  $\upc p_{ij} [Q_j] \subseteq U \cap Q_i$.  But, by $(*)$,
  $\upc p_{ij} [Q_j] = Q_i$, so $Q_i \subseteq U$.
  
  For every $\vec x \in K$, for every $i \in I$, $x_i = p_i (\vec x)$
  is in $Q_i$, and $x_i = p_{ij} (x_j)$ for all
  $i \sqsubseteq j \in I$, since $\vec x$ is in $X$; so
  $\vec x \in Q$.  Hence $K \subseteq Q$.


  In order to show the reverse inclusion $Q \subseteq K$, it suffices
  to show that every open neighborhood $U$ of $K$ in $X$ contains $Q$.
  This is because $K$ is saturated.  Using
  Remark~\ref{rem:prohorov:top} and the notations used there,
  $U = \dcup_{i \in I} p_i^{-1} (U_i)$.  Since $K$ is compact, $K$ is
  included in $p_i^{-1} (U_i)$ for some $i \in I$.  Then
  $Q_i = \upc p_i [K]$ is included in $U_i$.  By $(**)$, it follows
  that $\upc p_i [Q]$ is included in $U_i$, so
  $Q \subseteq p_i^{-1} (U_i) \subseteq U$.  Therefore $Q=K$.  \qed
\end{proof}

\begin{theorem}
  \label{thm:projective:coh}
  The class of coherent sober spaces is projective.
\end{theorem}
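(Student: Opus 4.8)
The plan is to reduce everything to Lemma~\ref{lemma:limproj:compact}, which describes the compact saturated subsets of a projective limit of sober spaces. Sobriety of the limit is free: any limit of sober spaces in $\Topcat$ is sober, as recalled at the start of Section~\ref{sec:sobr-comp-local}. So, writing $X, {(p_i)}_{i \in I}$ for the canonical projective limit of a projective system ${(p_{ij} \colon X_j \to X_i)}_{i \sqsubseteq j \in I}$ of coherent sober spaces, it remains only to prove that $X$ is coherent, namely that the intersection of any two compact saturated subsets of $X$ is compact (saturation being automatic, as an intersection of upwards-closed sets).

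First I would fix two compact saturated subsets $K$ and $K'$ of $X$, and apply Lemma~\ref{lemma:limproj:compact}(2) to $K$ with $Q_i \eqdef \upc p_i [K]$, and to $K'$ with $Q'_i \eqdef \upc p_i [K']$. Unwinding the definition of the canonical projective limit as a subspace of $X$, this says exactly that $K = \{\vec x \in X \mid \forall i \in I, x_i \in Q_i\}$ and $K' = \{\vec x \in X \mid \forall i \in I, x_i \in Q'_i\}$, with each $Q_i$ and each $Q'_i$ compact saturated in $X_i$.

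Next I would form the pointwise intersections $Q_i \cap Q'_i$. Since each $X_i$ is coherent, each $Q_i \cap Q'_i$ is compact saturated in $X_i$; and since $p_{ij} [Q_j] \subseteq Q_i$ and $p_{ij} [Q'_j] \subseteq Q'_i$, we get $p_{ij} [Q_j \cap Q'_j] \subseteq Q_i \cap Q'_i$ for all $i \sqsubseteq j \in I$. Thus ${(p_{ij|Q_j \cap Q'_j} \colon Q_j \cap Q'_j \to Q_i \cap Q'_i)}_{i \sqsubseteq j \in I}$ is again a projective system of the kind covered by Lemma~\ref{lemma:limproj:compact}(1), and therefore its canonical projective limit is a compact saturated subset of $X$.

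Finally, the crux is to identify that limit with $K \cap K'$. From the two set-theoretic descriptions above, $K \cap K' = \{\vec x \in X \mid \forall i \in I, x_i \in Q_i \cap Q'_i\}$, which is precisely the canonical projective limit of ${(Q_i \cap Q'_i)}_{i \in I}$ viewed as a subset of $X$. Hence $K \cap K'$ is compact saturated, so $X$ is coherent, and together with sobriety this shows the class of coherent sober spaces is projective. I do not anticipate a genuine obstacle: the only delicate point is the bookkeeping that lets intersection commute with the formation of the limit, i.e.\ making precise that ``the projective limit of the $Q_i$'' coincides with the explicit subset $\{\vec x \in X \mid \forall i, x_i \in Q_i\}$; all the topological substance (compactness of the fibrewise intersections) has already been isolated into coherence of the $X_i$ and into Lemma~\ref{lemma:limproj:compact}.
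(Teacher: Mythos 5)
Your proposal is correct and follows essentially the same route as the paper's proof: both express the two compact saturated sets via Lemma~\ref{lemma:limproj:compact}(2), observe that their intersection is the canonical projective limit of the fibrewise intersections $Q_i \cap Q'_i$ (compact saturated by coherence of each $X_i$), and conclude by Lemma~\ref{lemma:limproj:compact}(1), with sobriety of the limit coming for free from the projectivity of the class of sober spaces.
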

\begin{proof}
  Let ${(p_{ij} \colon X_j \to X_i)}_{i \sqsubseteq j \in I}$ be a
  projective system of coherent sober spaces and
  $X, {(p_i)}_{i \in I}$ be its canonical projective limit.  We
  consider any two compact saturated subsets $Q$ and $Q'$ of $X$.
  Using Lemma~\ref{lemma:limproj:compact}, we write $Q$ as the
  canonical projective limit of
  ${(p_{ij|Q_j} \colon Q_j \to Q_i)}_{i \sqsubseteq j \in I}$, where
  each $Q_i$ is compact saturated in $X_i$ and
  $p_{ij} [Q_j] \subseteq Q_i$ for all $i \sqsubseteq j \in I$, and
  similarly with $Q'$, with compact saturated subsets $Q'_i$ of $X_i$.
  The elements $\vec x \eqdef {(x_i)}_{i \in I}$ of $Q \cap Q'$ are
  those elements of $X$ such that $x_i \in Q_i$ for every $i \in I$
  and $x_i \in Q'_i$ for every $i \in I$, equivalently those such that
  $x_i \in Q_i \cap Q'_i$ for every $i \in I$.  Hence $Q \cap Q'$ is
  the canonical projective limit of
  ${(p_{ij|Q_j \cap Q'_j} \colon Q_j \cap Q'_j \to Q_i \cap Q'_i)}_{i
    \sqsubseteq j \in I}$.  Since each $X_i$ is coherent, every
  intersection $Q_i \cap Q'_i$ is compact saturated, so $Q \cap Q'$ is
  compact saturated by Lemma~\ref{lemma:limproj:compact}.  \qed
\end{proof}

Sobriety is essential, as we now show.
\begin{example}
  \label{exa:projective:coh:no}
  We consider the following space $X$; that is the subspace of maximal
  elements of a dcpo built by X. Jia \cite[Example~4.1.2]{Jia:PhD} in
  order to show that the well-filteredness assumption is required in
  the Jia-Jung-Li theorem \cite[Lemma~3.1]{JJL:dcpo:coh}.  $X$
  consists of two disjoint copies of $\nat$, $X^-$ and $X^+$; we write
  the elements of $X^-$ (resp., $X^+$) as $n^-$ (resp., $n^+$),
  $n \in \nat$.  The closed subsets of $X$ are: $(a)$ the finite
  subsets of $X^- \cup X^+$, and $(b)$ any subset of $X^- \cup X^+$
  that contains $X^-$.  We let the reader check that $X$ is a $T_1$
  space whose compact subsets are the finite subsets of $X^+$ and the
  subsets of $X$ that intersect $X^-$.  In particular, the sets
  $Q \eqdef \{n^- \mid n \text{ odd}\} \cup X^+$ and
  $Q' \eqdef \{n^- \mid n \text{ even}\} \cup X^+$ are compact
  (saturated), but their intersection is not.  Hence $X$ is not
  coherent.
  
  We observe that $X$ arises as a projective limit of coherent spaces,
  using a construction similar to Stone's example
  (Example~\ref{exa:Stone}).  For each $n \in \nat$, let $X_n$ consist
  of the same points as $X$, but with the following modified topology
  $\tau_n$.  Its closed subsets are sets of type $(a)$ as given above,
  or $(b_n)$: sets of the form $X^- \cup A^+$, where $A^+$ is a subset
  of $X^+$ that is finite or contains every point $k^+$ with
  $k \geq n$.  Each space $X_n$ is Noetherian, hence coherent.  The
  supremum topology $\bigvee_{n \in \nat} \tau_n$ coincides with the
  topology of $X$.  By Remark~\ref{rem:id:bond}, we have therefore
  obtained the non-coherent space $X$ as a projective limit of an
  $\omega$-projective system of coherent spaces $X_n$.
\end{example}

\begin{corollary}
  \label{corl:coh:proj}
  The class of coherent spaces is not $\omega$-projective.
\end{corollary}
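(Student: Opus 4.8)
The plan is to read the corollary directly off Example~\ref{exa:projective:coh:no}, which has already done all the real work. That example produces a single space $X$ (two disjoint copies $X^-$ and $X^+$ of $\nat$, with the prescribed closed sets) that is \emph{not} coherent: the sets $Q$ and $Q'$ there are compact saturated, yet their intersection is $Q \cap Q' = X^+$, which is infinite and disjoint from $X^-$, so by the stated characterization of the compact subsets of $X$ it fails to be compact. Simultaneously, the example exhibits $X$ as the projective limit of the system ${(X_n)}_{n \in \nat}$ obtained by refining the topology on the same underlying set, with all bonding maps the identity, invoking Remark~\ref{rem:id:bond} (directed suprema of topologies are projective limits, here $\bigvee_{n \in \nat} \tau_n$).

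First I would confirm that each space in the system belongs to the class in question: this is immediate from the example, where each $X_n$ is shown to be Noetherian, and every Noetherian space is coherent. Next I would verify that the system genuinely qualifies as $\omega$-projective: the index poset is $(\nat, \leq)$, which is directed and serves as its own countable cofinal subset, so its limit is an $\omega$-projective limit. Combining these two points with the preceding paragraph, we have an $\omega$-projective system of coherent spaces whose limit $X$ is not coherent, which is precisely the assertion that the class of coherent spaces is not $\omega$-projective.

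There is essentially no obstacle at the level of the corollary itself; all of the content resides in Example~\ref{exa:projective:coh:no}, specifically in checking that $\bigvee_{n \in \nat} \tau_n$ recovers the topology of $X$ and that the compact subsets of $X$ are exactly the finite subsets of $X^+$ together with the subsets meeting $X^-$. The single point I would take care over is the direction of the conclusion: one must be sure that the limit $X$ leaves the class, i.e.\ that $X$ is \emph{not} coherent, and this is exactly what the failure of compactness of $X^+ = Q \cap Q'$ provides.
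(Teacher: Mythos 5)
Your proposal is correct and follows exactly the paper's intended argument: the corollary is read directly off Example~\ref{exa:projective:coh:no}, where the non-coherent space $X$ (non-coherent because $Q \cap Q' = X^+$ is not compact) is exhibited as the limit of the $\omega$-projective system of Noetherian, hence coherent, spaces $X_n$ with identity bonding maps over the index set $(\nat,\leq)$, via Remark~\ref{rem:id:bond}. Note in passing that you correctly read the example's conclusion as asserting that $X$ is \emph{not} coherent---the sentence ``Hence $X$ is coherent'' in the example is evidently a typo---which is precisely the direction the corollary requires.
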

Using Lemma~\ref{lemma:projlim:lift}, $X_\bot$ is an
$\omega$-projective limit of the spaces $X_{n\bot}$, where $X$ and
$X_n$ are as in Example~\ref{exa:projective:coh:no}.  Hence:
\begin{corollary}
  \label{corl:coh:proj:compact}
  The class of coherent, compact spaces is not $\omega$-projective.
\end{corollary}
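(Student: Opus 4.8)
The plan is to reuse the counterexample from Example~\ref{exa:projective:coh:no} and push it through the lifting functor $\__\bot$, which turns any space into a compact one while preserving both coherence and projective limits. The three ingredients I need are all already available: $X_\bot$ is always compact \cite[Exercise~4.4.25]{JGL-topology}; $X_\bot$ is coherent if and only if $X$ is \cite[Proposition~9.1.10]{JGL-topology}; and lifting preserves projective limits (Lemma~\ref{lemma:projlim:lift}).

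First I would take the $\omega$-projective system ${(p_{mn} \colon X_n \to X_m)}_{m \leq n}$ of Example~\ref{exa:projective:coh:no}, whose canonical projective limit is the non-coherent space $X$ and whose terms $X_n$ are Noetherian, hence coherent. Its index set is the chain $\nat$, which is directed and therefore connected as a category, so Lemma~\ref{lemma:projlim:lift} applies and exhibits $X_\bot$ as the $\omega$-projective limit of the lifted system ${(p_{mn\bot} \colon X_{n\bot} \to X_{m\bot})}_{m \leq n}$. Each term $X_{n\bot}$ is coherent, because $X_n$ is, and compact, because it is a lifting; so the whole system lives inside the class of coherent compact spaces.

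Finally I would observe that the limit $X_\bot$ is compact but \emph{not} coherent: it is compact as a lifting, and it fails to be coherent precisely because $X$ does, via the ``only if'' direction of the equivalence ``$X_\bot$ coherent iff $X$ coherent''. Hence $X_\bot$ is an $\omega$-projective limit of coherent compact spaces that is not itself coherent, which is exactly what it means for the class of coherent compact spaces to fail to be $\omega$-projective. I do not expect any genuine obstacle here: the real content was already established in Example~\ref{exa:projective:coh:no}, and the corollary merely adds compactness ``for free'' through lifting. The only points deserving a moment's care are checking that Lemma~\ref{lemma:projlim:lift} is applicable (connectedness of the directed index category) and that coherence transfers back from $X_\bot$ to $X$, both of which are immediate from the cited facts.
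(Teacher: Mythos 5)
Your proposal is correct and follows exactly the paper's own argument: lift the system of Example~\ref{exa:projective:coh:no} via Lemma~\ref{lemma:projlim:lift}, so that $X_\bot$ becomes an $\omega$-projective limit of the compact coherent spaces $X_{n\bot}$ while failing to be coherent since $X$ is not. The extra checks you flag (connectedness of the directed index category, transfer of non-coherence from $X$ to $X_\bot$) are sound and are precisely what the paper leaves implicit.
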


\section{Local strong sobriety}
\label{sec:local-strong-sobr}

We recall that the locally strongly sober spaces are the sober,
coherent, and weakly Hausdorff spaces \cite[Theorem~3.5]{JGL:wHaus}.

\begin{theorem}
  \label{thm:projective:lssober}
  The class of locally strongly sober spaces is projective, and so is
  the class of strongly sober spaces.
\end{theorem}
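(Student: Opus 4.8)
The plan is to rely on the characterization of the locally strongly sober spaces as the sober, coherent, weakly Hausdorff spaces \cite[Theorem~3.5]{JGL:wHaus}. Let ${(p_{ij} \colon X_j \to X_i)}_{i \sqsubseteq j \in I}$ be a projective system of locally strongly sober spaces, with canonical projective limit $X, {(p_i)}_{i \in I}$. Each $X_i$ is then sober and coherent, so $X$ is sober (by \cite[Theorem~8.4.13]{JGL-topology}) and coherent (by Theorem~\ref{thm:projective:coh}). It therefore remains to show that $X$ is weakly Hausdorff, after which the cited characterization finishes the locally strongly sober case. For the strongly sober case, one adds that if each $X_i$ is moreover compact, then so is $X$, by Steenrod's theorem (Fact~\ref{fact:steenrod}); since a strongly sober space is exactly a compact locally strongly sober space, this concludes.

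To prove weak Hausdorffness I would use its formulation in terms of compact saturated sets. Let $Q$ and $Q'$ be compact saturated subsets of $X$ and let $W$ be an open neighborhood of $Q \cap Q'$. Using Lemma~\ref{lemma:limproj:compact}, write $Q$ as the canonical projective limit of the $Q_i \eqdef \upc p_i [Q]$ and $Q'$ as that of the $Q'_i \eqdef \upc p_i [Q']$, each $Q_i, Q'_i$ compact saturated in $X_i$. As in the proof of Theorem~\ref{thm:projective:coh}, $Q \cap Q'$ is the canonical projective limit of the system ${(p_{ij|Q_j \cap Q'_j} \colon Q_j \cap Q'_j \to Q_i \cap Q'_i)}_{i \sqsubseteq j \in I}$, and each $Q_i \cap Q'_i$ is compact (coherence of $X_i$), saturated, and sober (as a saturated subspace of the sober space $X_i$, by Remark~\ref{rem:sat:sober}). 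By Remark~\ref{rem:prohorov:top}, $W = \dcup_{i \in I} p_i^{-1} (W_i)$, where $W_i$ is the largest open subset of $X_i$ with $p_i^{-1} (W_i) \subseteq W$; since $Q \cap Q'$ is compact, $Q \cap Q' \subseteq p_{i_0}^{-1} (W_{i_0})$ for some index $i_0$.

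The crux is to promote this to an index $j$ at which the whole intersection $Q_j \cap Q'_j$, not merely $\upc p_j [Q \cap Q']$, lands inside a suitable open set. For that I would apply Lemma~\ref{lemma:steenrod:open} to the above projective system of compact sober spaces $Q_j \cap Q'_j$, whose canonical projective limit is $Q \cap Q'$. Since $\upc p_{i_0} [Q \cap Q'] \subseteq W_{i_0}$ and is contained in $Q_{i_0} \cap Q'_{i_0}$, the set $W_{i_0} \cap (Q_{i_0} \cap Q'_{i_0})$ is an open neighborhood of $\upc p_{i_0} [Q \cap Q']$ in the subspace $Q_{i_0} \cap Q'_{i_0}$; Lemma~\ref{lemma:steenrod:open} then yields $j \sqsupseteq i_0$ with $\upc p_{i_0 j} [Q_j \cap Q'_j] \subseteq W_{i_0}$ (upward closure taken in that subspace). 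In particular $Q_j \cap Q'_j \subseteq p_{i_0 j}^{-1} (W_{i_0}) \subseteq W_j$, the last inclusion holding because $p_j^{-1} (p_{i_0 j}^{-1} (W_{i_0})) = p_{i_0}^{-1} (W_{i_0}) \subseteq W$ and $W_j$ is the largest such open set.

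Finally, $W_j$ is an open neighborhood of $Q_j \cap Q'_j$ in the weakly Hausdorff space $X_j$, so there are open neighborhoods $U_j$ of $Q_j$ and $V_j$ of $Q'_j$ in $X_j$ with $U_j \cap V_j \subseteq W_j$. Setting $U \eqdef p_j^{-1} (U_j)$ and $V \eqdef p_j^{-1} (V_j)$, one checks that $U \supseteq Q$ and $V \supseteq Q'$ (since $p_j [Q] \subseteq Q_j$ and $p_j [Q'] \subseteq Q'_j$) and that $U \cap V = p_j^{-1} (U_j \cap V_j) \subseteq p_j^{-1} (W_j) \subseteq W$, which is what weak Hausdorffness demands. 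I expect the main obstacle to be precisely the promotion step of the previous paragraph: unlike $\upc p_j [Q \cap Q']$, the set $Q_j \cap Q'_j$ can be strictly larger, and it is the compactness-and-sobriety input of Lemma~\ref{lemma:steenrod:open}, applied to the intersection system, that forces $Q_j \cap Q'_j$ itself into the required open set once $j$ is large enough.
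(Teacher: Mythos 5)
Your proposal is correct and follows essentially the same route as the paper's proof: reduce to sober + coherent + weakly Hausdorff, decompose $Q$, $Q'$, and $Q \cap Q'$ as projective limits of the $\upc p_i[Q]$, $\upc p_i[Q']$ and their intersections via Lemma~\ref{lemma:limproj:compact}, apply Lemma~\ref{lemma:steenrod:open} to the intersection system to find the index $j$, and then invoke weak Hausdorffness of $X_j$ and pull back along $p_j$. The only (harmless) cosmetic difference is that you route the final inclusion through the maximal open set $W_j$ rather than using $p_{i_0 j}^{-1}(W_{i_0})$ directly as the target of the weak-Hausdorffness step, as the paper does.
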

\begin{proof}
  The second part follows from the first part and Steenrod's theorem
  (Fact~\ref{fact:steenrod}), remembering that the strongly sober
  spaces are the locally strongly sober spaces that are compact.
  
  Let ${(p_{ij} \colon X_j \to X_i)}_{i \sqsubseteq j \in I}$ be a
  projective system of coherent, weakly Hausdorff sober spaces and
  $X, {(p_i)}_{i \in I}$ be its canonical projective limit.  Let $Q$
  and $Q'$ be two compact saturated subsets of $X$, and $W$ be an open
  neighborhood of $Q \cap Q'$.  By Theorem~\ref{thm:projective:coh},
  $X$ is coherent, so $Q \cap Q'$ is compact saturated.

  We use Remark~\ref{rem:prohorov:top}, and we write $W$ as
  $\dcup_{i \in I} p_i^{-1} (W_i)$, where $W_i$ is open in $X_i$ and
  $p_i^{-1} (W_i) \subseteq W$.  Since $Q \cap Q'$ is compact, there
  is an index $i \in I$ such that
  $Q \cap Q' \subseteq p_i^{-1} (W_i)$.

  As in the proof of Theorem~\ref{thm:projective:coh}, $Q \cap Q'$ is
  in fact the canonical projective limit
  ${(p_{ij|Q_j \cap Q'_j} \colon Q_j \cap Q'_j \to Q_i \cap Q'_i)}_{i
    \sqsubseteq j \in I}$, where $Q$ (resp.\ $Q'$) is the canonical
  projective limit of a projective system
  ${(p_{ij|Q_j} \colon Q_j \to Q_i)}_{i \sqsubseteq j \in I}$ of
  compact saturated subsets $Q_i$ (resp.\ $Q'_i$) of each $X_i$.
  Lemma~\ref{lemma:limproj:compact}, item~2, also tells us that we can
  take $Q_i \eqdef \upc p_i [Q]$ and $Q'_i \eqdef \upc p_i [Q']$.

  As saturated subspaces of the coherent sober spaces $X_i$, the
  spaces $Q_i \cap Q'_i$ are compact, and sober by
  Remark~\ref{rem:sat:sober}.  Since
  $Q \cap Q' \subseteq p_i^{-1} (W_i)$, $\upc p_i [Q \cap Q']$ is a
  subset of the open subset $W_i \cap (Q_i \cap Q'_i)$ of the space
  $Q_i \cap Q'_i$.  We can therefore use
  Lemma~\ref{lemma:steenrod:open}: there is an index $j \in I$ such
  that $i \sqsubseteq j$ and
  $\upc p_{ij} [Q_j \cap Q'_j] \subseteq W_i$.  Hence
  $Q_j \cap Q'_j \subseteq p_{ij}^{-1} (W_i)$.  Since $X_j$ is weakly
  Hausdorff, there is an open neighborhood $U$ of $Q_j$ and an open
  neighborhood $V$ of $Q'_j$ in $X_j$ such that
  $U \cap V \subseteq p_{ij}^{-1} (W_i)$.  We recall that
  $Q_j = \upc p_j [Q]$, so $Q \subseteq p_j^{-1} (U)$, and similarly
  $Q' \subseteq p_j^{-1} (V)$.  Additionally,
  $p_j^{-1} (U) \cap p_j^{-1} (V) = p_j^{-1} (U \cap V) \subseteq
  p_j^{-1} (p_{ij}^{-1} (W_i)) = p_i^{-1} (W_i) \subseteq W$.  Hence
  $X$ is weakly Hausdorff, and it is coherent and sober by
  Theorem~\ref{thm:projective:coh}.  \qed
\end{proof}

\begin{remark}
  \label{rem:limU}
  Another, apparently easier way of proving
  Theorem~\ref{thm:projective:lssober} would be to return to the
  original definition of (locally) strongly sober spaces.  Given a
  convergent ultrafilter $\mathcal U$ on the projective limit $X$, we
  form its images $p_i [\mathcal U]$ under $p_i \colon X \to X_i$.
  Those are convergent ultrafilters in each $X_i$, and since $X_i$ is
  assumed to be (locally) strongly sober, $\lim p_i [\mathcal U]$ is
  the downward closure $\dc x_i$ of a unique point $x_i$ in $X_i$.  It
  is easy to see that $C \eqdef \bigcap_{i \in I} p_i^{-1} (\dc x_i)$
  is exactly the set of limits of $\mathcal U$, but showing that this
  set has a largest element seems to be no easier than doing what we
  did to prove Theorem~\ref{thm:projective:lssober}.  Note that there
  is no reason to believe that ${(x_i)}_{i \in I}$ would even live in
  $X$.
\end{remark}

We will show that both coherence and sobriety are required in order
for weak Hausdorffness to be preserved by projective limits.  The basic
construction is the following.
\begin{figure}
  \centering
  \includegraphics[scale=0.32]{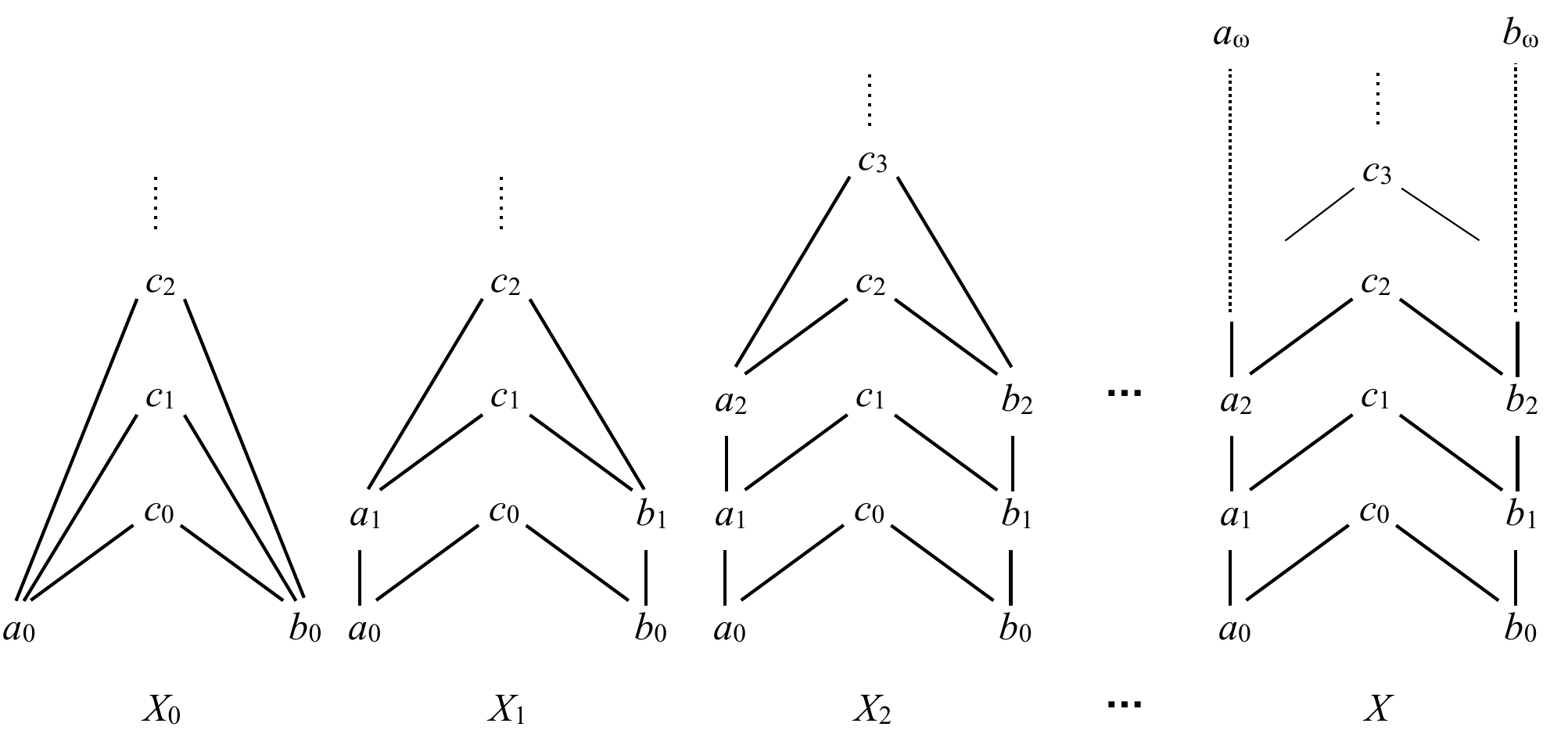}
  \caption{Knijnenburg's dcpo, as a projective limit}
  \label{fig:knijnenburg:limproj}
\end{figure}
\begin{example}
  \label{exa:knijnenburg}
  The dcpo $X$ shown on the right of
  Figure~\ref{fig:knijnenburg:limproj} was invented by Knijnenburg
  \cite[Example~6.1]{Knijnenburg:powerdomain} in order to show that
  the Egli-Milner ordering and the topological Egli-Milner ordering on
  spaces of lenses may fail to coincide.  (We have removed its bottom
  element, which serves no purpose here.)  Explicitly, the elements of
  $X$ are $a_n$ and $b_n$ for every $n \in \nat \cup \{\omega\}$, and
  $c_n$ for every $n \in \nat$ (not $\omega$), all pairwise distinct.
  The ordering is given by: $a_m \leq a_n$, $b_m \leq b_n$,
  $a_m \leq c_n$, $b_m \leq c_n$ if and only if $m \leq n$; all other
  pairs of elements are incomparable.  Note that the points $c_n$,
  $n \in \nat$, are all incomparable.  We give $X$ the Scott topology.
  One checks that neither $\{a_\omega\}$ nor $\{b_\omega\}$ is open,
  but that $\upc x$ is open for every
  $x \in X \diff \{a_\omega, b_\omega\}$, and those form a base of the
  Scott topology.  (In domain-theoretic terms, $X$ is an algebraic
  domain whose finite elements are all elements except $a_\omega$ and
  $b_\omega$.)  In particular, every open neighborhood of $a_\omega$
  intersects every open neighborhood of $b_\omega$, so $X$ is not
  weakly Hausdorff.
\end{example}

We observe that Knijnenburg's dcpo arises as an $\omega$-projective
limit in at least two different ways.  Here is the first way.

\begin{example}
  \label{exa:knijnenburg:wH}
  For every $n \in \nat$, let $X_n$ be the subposet of Knijnenburg's
  dcpo $X$ consisting of $a_0$, \ldots, $a_n$, $b_0$, \ldots, $b_n$
  and every $c_k$, $k \in \nat$, see the left part of
  Figure~\ref{fig:knijnenburg:limproj}.  Let $p_n \colon X \to X_n$
  map each $a_k$ to $a_{\min (k,n)}$, $b_k$ to $b_{\min (k, n)}$, and
  each $c_k$ to itself.  Each space $X_n$ is a dcpo, and is given the
  Scott topology.  It is easy to see that every directed family in
  $X_n$ must be trivial, in the sense that it already contains its
  supremum.  It follows that the topology of $X_n$ coincides with the
  Alexandroff topology.

  As in every poset with the Alexandroff topology, the irreducible
  closed subsets are exactly the directed subsets, see
  \cite{Hoffmann:Idl:S} or \cite[Fact 8.2.49]{JGL-topology}; since
  $X_n$ is a dcpo, every irreducible closed subset has a largest
  element, so $X_n$ is sober.  Since each $X_n$ has an Alexandroff
  topology, $X_n$ is weakly Hausdorff.  $X_n$ is compact, too, being
  the upward closure of the finite set $\{a_0, b_0\}$.  But no $X_n$
  is coherent, since
  $\upc a_0 \cap \upc b_0 = \{c_k \mid k \in \nat\}$ is not compact.

  Finally, for all $m \leq n$, let $p_{mn} \colon X_n \to X_m$ be the
  restriction of $p_m$ to $X_n$ and let $e_{mn}$ be the inclusion of
  $X_m$ into $X_n$, $e_n$ be the inclusion of $X_n$ into $X$.  Those
  are continuous maps; a simplifying remark, which applies to all
  except $p_m$, is that any monotonic map from a poset with an
  Alexandroff topology to another poset is continuous.  Then the
  spaces $X_n$ ($n \in \nat$) and the pairs $(e_{mn}, p_{mn})$ with
  $m \leq n \in \nat$ form an ep-system, and $X, {(p_n)}_{n \in \nat}$
  is a projective limit of the projective system
  ${(p_{mn} \colon X_n \to X_m)}_{m \leq n \in \nat}$.
\end{example}

\begin{corollary}
  \label{corl:wH:proj}
  The classes of weakly Hausdorff spaces, of weakly Hausdorff compact
  spaces, of weakly Hausdorff sober spaces, and of weakly Hausdorff
  compact sober spaces are not $\omega$-projective.  Even
  $\omega$-projective limits of ep-systems of such spaces may fail to
  be weakly Hausdorff.
\end{corollary}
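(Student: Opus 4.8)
The plan is to read off everything from the single ep-system already constructed in Example~\ref{exa:knijnenburg:wH}, which was designed precisely to produce this failure. The four classes in the statement are nested, the class of weakly Hausdorff compact sober spaces being contained in each of the other three; so it suffices to exhibit one $\omega$-projective system whose spaces all lie in this smallest class and whose limit fails the weakest of the four properties, namely weak Hausdorffness. That one system will then simultaneously witness the non-$\omega$-projectivity of all four classes.

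First I would record that each $X_n$ of Example~\ref{exa:knijnenburg:wH} is weakly Hausdorff, compact, and sober, which is exactly what that example establishes: $X_n$ carries the Alexandroff topology (every directed family in it is trivial), hence is weakly Hausdorff; it is the upward closure of the finite set $\{a_0, b_0\}$, hence compact; and, being a dcpo with the Alexandroff topology, its irreducible closed subsets are its directed subsets, each of which has a largest element, so $X_n$ is sober. Thus every $X_n$ belongs to all four classes, in particular to the class of weakly Hausdorff compact sober spaces.

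Next I would note that the index set is $\nat$, so the system is $\omega$-projective, and that it is in fact an ep-system, the bonding maps $p_{mn}$ being the projection parts of the ep-pairs $(p_{mn}, e_{mn})$ exhibited in the example. By the same example, $X, {(p_n)}_{n \in \nat}$ is a projective limit of this system, where $X$ is Knijnenburg's dcpo of Example~\ref{exa:knijnenburg}. That example shows $X$ is not weakly Hausdorff: one has $\upc a_\omega \cap \upc b_\omega = \emptyset$, yet every open neighborhood of $a_\omega$ meets every open neighborhood of $b_\omega$, so no open sets $U \ni a_\omega$ and $V \ni b_\omega$ can satisfy $U \cap V \subseteq \emptyset$, contradicting the weak Hausdorff condition applied to $W = \emptyset$.

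Combining these observations finishes the corollary: each space of the system lies in all four classes, while the limit $X$ fails even to be weakly Hausdorff, and the system is an $\omega$-projective ep-system. I do not expect any genuine obstacle in the corollary itself; all the real content lies in the two preceding examples, and the single point one would have to be careful about, were one to redo the argument from scratch, is the identification of the projective limit of the $X_n$ with $X$ as topological spaces --- that is, checking that the limit topology coincides with the Scott topology on Knijnenburg's dcpo --- which Example~\ref{exa:knijnenburg:wH} already carries out.
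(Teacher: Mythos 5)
Your proposal is correct and is exactly the paper's argument: the corollary is stated without a separate proof precisely because Examples~\ref{exa:knijnenburg} and~\ref{exa:knijnenburg:wH} already supply everything --- an $\omega$-indexed ep-system of weakly Hausdorff, compact, sober spaces $X_n$ whose projective limit is Knijnenburg's dcpo $X$, which is not weakly Hausdorff. Your additional observation that the four classes are nested, so one counterexample in the smallest class handles all of them, is the same (implicit) reduction the paper relies on.
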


\begin{example}
  \label{exa:knijnenburg:coh}
  Let again $X$ be Knijnenburg's dcpo, and let $X'_n$ contain the same
  points as the dcpos $X_n$ of Example~\ref{exa:knijnenburg:wH}, but
  with the following modified ordering $\leq'$: $x \leq' y$ if and
  only if $x \leq y$ (where $\leq$ is the ordering on $X_n$) or
  $x=c_i$ and $y=c_j$ for some $i, j \geq n$ such that $i \leq j$.  We
  give $X'_n$ the Scott topology.

  Note that there is a chain $c_n \leq c_{n+1} \leq \cdots$ in $X'_n$
  without a supremum.  Hence $X'_n$ is not a dcpo.  As such, $X'_n$
  cannot be sober, since otherwise it would be a monotone convergence
  space, namely a $T_0$ space whose topology is coarser than the Scott
  topology of its specialization ordering and that would be a dcpo
  with respect to that specialization ordering, see \cite[Proposition
  8.2.34]{JGL-topology}.  The upward-closure $\upc x$ of any point
  $x \in X'_n$ is Scott-open, and therefore those sets form a base of
  the Scott topology, which then coincides with the Alexandroff
  topology.  In particular, each $X'_n$ is weakly Hausdorff.  Every
  upwards-closed subset of $X'_n$ is the upward closure $\upc A$ of a
  finite set $A$, hence is compact.  It immediately follows that
  $X'_n$ is coherent, and compact.

  With $p_m$, $e_m$, $p_{mn}$, $e_{mn}$ defined as in
  Example~\ref{exa:knijnenburg:wH}, we check that the spaces $X'_n$
  ($n \in \nat$) and the pairs $(e_{mn}, p_{mn})$ with
  $m \leq n \in \nat$ form an ep-system, and $X, {(p_n)}_{n \in \nat}$
  is a projective limit of the projective system
  ${(p_{mn} \colon X'_n \to X'_m)}_{m \leq n \in \nat}$.
\end{example}

\begin{corollary}
  \label{corl:wH:proj:coh}
  The classes of coherent weakly Hausdorff spaces and of compact,
  coherent weakly Hausdorff spaces are not $\omega$-projective.  Even
  $\omega$-projective limits of ep-systems of such spaces may fail to
  be weakly Hausdorff.
\end{corollary}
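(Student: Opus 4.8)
The plan is to read the corollary off directly from Example~\ref{exa:knijnenburg:coh}, which has already done all the constructive work. There we exhibited, for each $n \in \nat$, a space $X'_n$ that is simultaneously compact, coherent, and weakly Hausdorff: its topology is the Alexandroff topology, every $\upc x$ ($x \in X'_n$) is open, and every upwards-closed subset is the upward closure $\upc A$ of a finite set $A$, hence compact; compactness and coherence are then immediate. The bonding maps $p_{mn}$ together with the embeddings $e_{mn}$ were checked to form an ep-system indexed by $(\nat, \leq)$, and Knijnenburg's dcpo $X$, with the cone ${(p_n)}_{n \in \nat}$, is its projective limit. Since the index set $\nat$ is countable, and is trivially cofinal in itself, this is an $\omega$-projective system, and in fact an ep-system.

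I would then invoke Example~\ref{exa:knijnenburg}: the limit $X$ fails to be weakly Hausdorff. Concretely, $a_\omega$ and $b_\omega$ are maximal, so $\upc a_\omega \cap \upc b_\omega = \emptyset$, and $W \eqdef \emptyset$ is an open neighborhood of this intersection; yet any Scott-open neighborhood of $a_\omega$ must contain some $a_m$, hence all $c_k$ with $k \geq m$, and symmetrically every open neighborhood of $b_\omega$ contains $c_k$ for $k$ large, so no two such neighborhoods $U$, $V$ can satisfy $U \cap V \subseteq W = \emptyset$. Thus the weak Hausdorffness witnessed by each $X'_n$ is not inherited by the limit, which establishes that neither the class of coherent weakly Hausdorff spaces nor the class of compact, coherent weakly Hausdorff spaces is $\omega$-projective, and that the failure occurs already for ep-systems.

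There is essentially no new obstacle at the level of the corollary: the argument is just the bookkeeping that collects the three stability properties of the $X'_n$, the $\omega$-projective (indeed ep-system) character of the indexing, and the failure of weak Hausdorffness for $X$. The only genuinely delicate point lies upstream, inside Example~\ref{exa:knijnenburg:coh} itself, namely confirming that the modified ordering $\leq'$ on the $c_i$ still produces $X$ (with its original Scott topology) as the limit, and that the pairs $(p_{mn}, e_{mn})$ really satisfy the ep-pair identities $p_{mn} \circ e_{mn} = \identity{X'_m}$ and $e_{mn} \circ p_{mn} \leq \identity{X'_n}$; once that verification is granted, the corollary follows with no further work.
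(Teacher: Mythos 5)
Your proposal is correct and matches the paper's own (implicit) proof: the corollary is read directly off Example~\ref{exa:knijnenburg:coh}, whose spaces $X'_n$ are compact, coherent and weakly Hausdorff and form an ep-system with $\omega$-projective limit the non-weakly-Hausdorff space $X$ of Example~\ref{exa:knijnenburg}. Your added detail on why $X$ fails weak Hausdorffness (taking $W = \emptyset$ as a neighborhood of $\upc a_\omega \cap \upc b_\omega = \emptyset$ while every pair of open neighborhoods of $a_\omega$ and $b_\omega$ meets in some $c_k$) is exactly the argument given there.
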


\section{Conclusion}
\label{sec:conclusion}

One can sum up the essential results of this paper in the following
diagram, listing eight classes of \emph{sober} spaces (with a $1$ in each
coordinate meaning that the corresponding property is to be considered):
\begin{center}
  \includegraphics[scale=0.4]{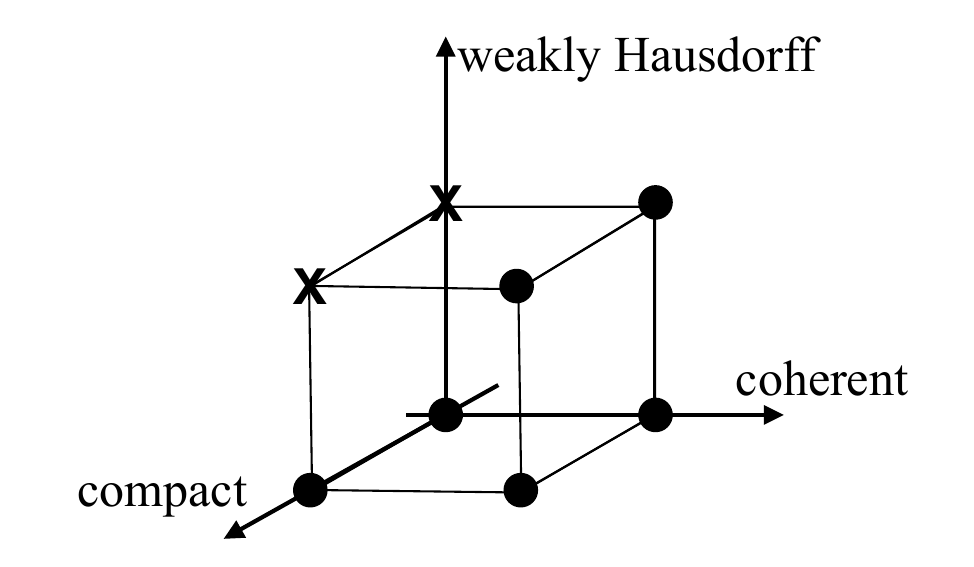}
\end{center}
A blob marks that the corresponding class is projective, and a cross
marks that it is not $\omega$-projective.  The corresponding diagram
for spaces that are not necessarily sober would have crosses
everywhere except at the origin, the diagram for locally compact
spaces would have crosses everywhere.

As a consequence, for example, the best we can say about projective
limits of stably (locally) compact spaces is that they are (locally)
strongly sober, but not more.

There are of course infinitely many classes for which we could ask the
same questions.  Proposition~9.5 of \cite{JGL:kolmogorov} states that
de Brecht's class of quasi-Polish spaces \cite{deBrecht:qPolish} is
$\omega$-projective.  Those generalize both the Polish spaces from
topological measure theory and descriptive set theory, and the
$\omega$-continuous domains from domain theory.  Natural extensions of
the concept include the domain-complete and the LCS-complete spaces of
\cite{dBGLJL:LCScomplete}, and Chen's countably correlated spaces
\cite{Chen:qPolish}.  It is unknown whether any of those three classes
is ($\omega$-)projective.








\section*{Competing interests}

The author declares none.

\bibliographystyle{elsarticle-harv}
\ifarxiv

\else
\bibliography{projlim}
\fi







\end{document}
